\newtheorem{theorem}{Theorem}
\newtheorem{corollary}[theorem]{Corollary}
\newtheorem{proposition}[theorem]{Proposition}
\newtheorem{definition}[theorem]{Definition}
\newtheorem{remark}[theorem]{\it Remark}
\newcommand{\Na}{\nabla}
\newcommand{\N}{\mathbb{N}}
\newcommand{\R}{\mathbb{R}}
\begin{document}

\title[Discrete resolvent operators]
{Explicit representation of discrete fractional resolvent families in Banach spaces.}\footnote{This paper is published in Fract. Calc. Appl. Anal. Vol. 24, No 6 (2021), pp. 1853-1878,  DOI: 10.1515/fca-2021-0080. Available at https://www.degruyter.com/journal/key/FCA/html , and since 2022, at  https://link.springer.com/journal/13540/volumes-and-issues.}

\author{Jorge Gonz\'alez-Camus}
\address{Universidad Tecnológica Metropolitana, Chile.}
\email{j.gonzalezc@utem.cl}

\author{Rodrigo Ponce}
\address{Universidad de Talca, Instituto de Matem\'aticas, Casilla 747, Talca-Chile.}
\email{rponce@inst-mat.utalca.cl}

\subjclass[2010]{Primary 34A08; Secondary 65J10, 65M22}

\keywords{Fractional differential equations, difference equations, resolvent families, unbounded linear operators.}


\begin{abstract}

In this paper we introduce a discrete fractional resolvent family $\{S_{\alpha,\beta}^n\}_{n\in\N_0}$ generated by a closed linear operator in a Banach space $X$ for a given $\alpha,\beta>0.$ Moreover, we study its main properties and, as a consequence, we obtain a method to study the existence and uniqueness of the solutions to discrete fractional difference equations in a Banach space.

\end{abstract}

\maketitle

\section{Introduction}

During the last six decades, the theory of $C_0-$semigroups of operators on Banach spaces has been used by many authors as a powerful tool to study linear and nonlinear partial differential equations, as well as, to study concrete equations arising in mathematical physics, probability theory, engineering, biological processes, among others. See for instance \cite{En-Na-00}. Typically, in these situations, the problems are modeled by using partial differential equations of first order with unbounded linear operators. However, there are many problems in applied sciences, including, problems in transport dynamics, anomalous diffusion, non-Brownian motion, and many others, where the model of a partial differential equation of first order is not completely satisfactory.

In recent decades, some investigations have demonstrated that some of these phenomena can be described more appropriately by means of time-fractional differential equations, see for instance, \cite{Al-Ca-Va-16,Ca-Pl-15,Ki-Sr-Tr-06,Li-Pe-Ji-12,Li-Li-15,Mi-Ro-93,To-Ta-17}. As the one-parameter semigroups represent the natural framework to study differential equations of first order, in the case of time-fractional differential equations, the theory of {\em continuous} fractional resolvent families of one-parameter (that extends the theory of semigroups) gives one of the main tool to study such equations, see for instance \cite{He-Me-Po-21,Li-Ch-Li-10,Li-Pe-Ji-12,Pon-20a}. For example, if we consider the time-fractional differential equation
\begin{equation}\label{Eq0.A}
\partial_t^\alpha u(t)=Au(t)+f(t),\quad t>0,
\end{equation}
under the initial condition $u(0)=x_0,$ where for  $0<\alpha<1,$ $\partial_t^\alpha$ corresponds to the Caputo fractional derivative, $f$ is a suitable function and $A$ generates an exponentially bounded fractional resolvent family $\{S_{\alpha,\alpha}(t)\}_{t\geq 0}$ (see for instance \cite{Cu-07,He-Me-Po-21,Li-Ch-Li-10,Li-Pe-Ji-12,Pon-20a}). Then, the solution to \eqref{Eq0.A} is given by
\begin{equation*}
u(t)=S_{\alpha,1}(t)x_0+\int_0^t S_{\alpha,\alpha}(t-s)f(s)ds,
\end{equation*}
where $S_{\alpha,1}(t):=\int_0^t \frac{(t-s)^{-\alpha}}{\Gamma(1-\alpha)}S_{\alpha,\alpha}(s)ds.$ This theory of continuous fractional resolvent families has been widely studied in the last recent years. See for instance \cite{Li-FPo-12,Pr,Wa-Ch-Xi-12} and the references therein. However, these continuous-time problems sometimes need to be studied, for practical purposes, as discrete problems.


Although the first investigations on difference of fractional order date back to the work of Kuttner \cite{Ku-57}, in the last decade, the study of existence and qualitative properties of discrete fractional difference equations has been a topic of great interest and there is an extensive recent literature in this subject, see for instance \cite{Ab-11,At-El-09,Fe-12,Go-11,Go-Pe-15} and the references therein. However, these articles focus mainly on scalar fractional difference equations. Very recently, C. Lizama in \cite{Li-17} introduced, to the best of our knowledge, the first study on fractional difference equations with unbounded linear operators. Here, the author finds an interesting relation between the existence of solutions to an abstract fractional difference and a discrete family of linear operators that corresponds to a discretization of a continuous fractional resolvent family. More concretely, if $0<\alpha<1,$ $A$ is a closed linear operator defined on a Banach space $X$ and $_C\Delta^\alpha u^n$ is the approximation of the Caputo fractional derivative $\partial^\alpha_t u(t)$ (at time $t=n$) defined by
\begin{equation*}
_C\Delta^{\alpha}u^n:=\sum_{j=0}^n \frac{\Gamma(1-\alpha+n-j)}{\Gamma(1-\alpha)\Gamma(n-j+1)}(u^{j+1}-u^j),
\end{equation*}
where $u^j:=\int_0^\infty p_j(t)u(t)dt$ and $p_j(t):=t^j/j!e^{-t}$ is the Poisson distribution for $j\in\N_0,$ then solution to the fractional difference equation
\begin{eqnarray*}\label{eqDiscrete-0}
_C\Delta^\alpha u^n=Au^{n+1}, \quad n\in\N,
\end{eqnarray*}
is given by $u^n=S_{\alpha,1}^n(I-A)u_0,$ where $u_0\in D(A),$ $\{S_{\alpha,1}(t)\}_{t\geq 0}$ is the continuous resolvent family generated by $A,$ whose Laplace transform satisfies $\hat{S}_{\alpha,1}(\lambda)=\lambda^{\alpha-1}(\lambda^\alpha-A)^{-1}$ and $S_{\alpha,1}^n:=\int_0^\infty p_n(t) S_{\alpha,1}(t)dt,$ for all $n\in\mathbb{N}_0.$ From \cite{Li-Ch-Li-10} it follows that the resolvent family $\{S_{\alpha,1}(t)\}_{t\geq 0}$ satisfies the resolvent equation
\begin{equation*}
S_{\alpha,1}(t)x=x+A\int_0^t g_\alpha(t-s)S_{\alpha,1}(s)xds,\quad x\in X,\,\, t\geq 0,
\end{equation*}
where $g_\alpha(t):=t^{\alpha-1}/\Gamma(\alpha).$ Moreover, from \cite{Li-17}, it is easy to see that the sequence of operators $\{S_{\alpha,1}^n\}_{n\in\N_0}$ verifies a similar relation:
\begin{equation*}
S_{\alpha,1}^nx=x+A\sum_{j=0}^n k^\alpha(n-j)S_{\alpha,\alpha}^jx,\quad x\in X,\, n\in \N_0,
\end{equation*}
where $k^\alpha(j):=\frac{\Gamma(\alpha+j)}{\Gamma(\alpha)\Gamma(j+1)}.$ According to the Poisson distribution, we notice that for each $n\in\N_0,$ $S_{\alpha,1}^n$ corresponds to an approximation of $S_{\alpha,1}(t)$ at time $t=n.$ Similarly, in \cite{Ab-Al-Di-21,Ab-Li-16,Ab-Li-Mi-Ve-19,Al-Di-Li-20,Li-He-Zh-20,Xi-Wa-18} the authors have introduced several discrete resolvent families to study fractional difference equations in Banach spaces. See \cite{Go-Li-20a,Go-Li-20b} for related results. We notice also that, fractional difference equations are closely related with discretization of fractional differential equations in Banach spaces, see for instance \cite{Ji-La-Zh-16,Ji-La-Zh-19,Ji-Li-Zh-18,Ji-Li-Zh-18b,Ji-Li-Zh-19,Lu-86,Pon-19}.

Although the continuous fractional resolvent families are an important tool in the study of fractional differential equations in Banach spaces and, there are many published papers on these families, their properties and applications, there are only a few articles on discrete fractional resolvent families generated by unbounded operators, and therefore, the study of the solutions of discrete fractional difference equations in Banach space has been limited by the lack of this tool.

In this paper, for a given $\alpha,\beta>0$ and a step-size $\tau>0,$ we introduce the general discrete resolvent family $\{S_{\alpha,\beta}^n\}_{n\in\N_0}$ generated by a closed linear operator $A$ in a Banach space $X,$ and we study its main properties. Moreover, we give a method to study the existence and uniqueness of solutions to discrete fractional difference equations in Banach spaces.


The paper is organized as follows. In Section \ref{Sect2} we give the preliminaries. In Section \ref{Sect3} we introduce the discrete fractional resolvent family $\{S_{\alpha,\beta}^n\}_{n\in\N_0}.$ Moreover, we study its main properties, conditions on the operator $A$ in order to be the generator of $\{S_{\alpha,\beta}^n\}_{n\in\N_0}$ and we show that $\{S_{\alpha,\beta}^n\}_{n\in\N_0}$ can be written as
\begin{equation*}
S_{\alpha,\beta}^nx=\sum_{j=1}^{n+1}a_{n,j}\tau^{-\alpha j}(\tau^{-\alpha}-A)^{-j}x, \quad n\in \N_0,
\end{equation*}
for all $x\in X,$ where $a_{n,j}$ are constants depending on $\alpha,\beta$ and $\tau.$ Finally, as an application of the results given previously, in Section \ref{Sect4} we study the existence and uniqueness of solutions to a fractional difference equation in a Banach space.

\section{Preliminaries}\label{Sect2}

The set of non-negative integer numbers will be denoted by $\mathbb{N}_0$ and the non-negative real numbers by $\mathbb{R}_0^+.$ Take $\tau>0$ fixed and $n\in\mathbb{N}_0.$ We define the positive functions $\rho_n^\tau$ by
\begin{equation*}
\rho_n^\tau(t):=e^{-\frac{t}{\tau}}\left(\frac{t}{\tau}\right)^{n}\frac{1}{\tau n!},
\end{equation*}
for all $t\geq 0,$ $n\in\mathbb{N}_0.$ An easy computation shows that
\begin{equation*}
\int_0^\infty \rho_n^\tau(t)dt=1, \quad \mbox{ for all }\quad n\in\mathbb{N}_0.
\end{equation*}

For a given Banach space $X,$ $s(\mathbb{N}_0,X)$ denotes the vectorial space consisting of all vector-valued sequences $v:\mathbb{N}_0\to X.$  The {\em backward Euler operator} $\nabla_\tau:s(\mathbb{N}_0,X)\to s(\mathbb{N}_0,X)$ is defined by
\begin{equation*}
\Na_\tau v^n:=\frac{v^n-v^{n-1}}{\tau}, \quad n\in\N.
\end{equation*}
For $m\geq 2,$ we define $\Na_\tau^m:s(\mathbb{N}_0,X)\to s(\mathbb{N}_0,X)$ recursively by
\begin{equation*}\label{DefNablam}
(\Na_\tau^m v)^n:=\Na_\tau^{m-1}(\Na_\tau v)^n, \quad n\geq m.
\end{equation*}
Here $\Na_\tau^{1}$ is defined as $\Na_\tau^{1}:=\Na_\tau$ and $\Na_\tau^{0}$ as the identity operator. As in \cite[Chapter 1, Section 1.5]{Go-Pe-15} we define by convention
\begin{equation}\label{EqConvention}
\sum_{j=0}^{-k} v^j=0
\end{equation}
for all $k\in\mathbb{N}.$

The operator $\Na_\tau^m$ is called the {\em backward difference operator of order} $m.$ It is easy to show that if $v\in s(\mathbb{N}_0,X)$ then
\begin{equation*}
(\Na_\tau^m v)^n=\frac{1}{\tau^m}\sum_{j=0}^m {m\choose j} (-1)^{j}v^{n-j}, \quad n\in\N.
\end{equation*}

For a given $\alpha>0,$ define the function $g_\alpha$ as $g_\alpha(t):=\frac{t^{\alpha-1}}{\Gamma(\alpha)}.$ Now, we introduce the following sequence
\begin{equation*}\label{eq2.1}
k^\alpha_\tau(n):=\int_0^\infty \rho_n^\tau(t)g_\alpha(t)dt, \quad n\in\mathbb{N}_0,\, \alpha>0.
\end{equation*}
It is easy to see that
\begin{equation}\label{eq2.2}
k^\alpha_\tau(n)=\frac{\tau^{\alpha-1}\Gamma(\alpha+n)}{\Gamma(\alpha)\Gamma(n+1)}=\frac{\Gamma(\alpha+n)}{\Gamma(n+1)}g_\alpha(\tau), \quad n\in\mathbb{N}_0,\, \alpha>0.
\end{equation}
In particular, we notice that $k^1_\tau(n)=1$ for all $n\in \N_0.$

\begin{definition}\cite{Pon-19} Let $\alpha>0.$ The $\alpha^{\rm th}-$fractional sum of $v\in\mathcal{F}(\R;X)$ is defined by
\begin{equation*}\label{eq0.1}
(\Na^{-\alpha}_\tau v)^n:=\tau\sum_{j=0}^n k^\alpha_\tau(n-j)v^j,\quad n\in\N_0.
\end{equation*}
\end{definition}

\begin{definition}\cite{Pon-19} Let $\alpha\in\mathbb{R}_+\setminus \N_0.$
The {\em Caputo fractional backward difference operator of order $\alpha$,} $_C\Na^\alpha: \mathcal{F}(\R_+;X)\to \mathcal{F}(\R_+;X),$ is defined by
\begin{equation*}
(_C\Na^\alpha v)^n:=\Na^{-(m-\alpha)}_\tau(\Na^m_\tau v)^n,\quad n\in\N,
\end{equation*}
where $m-1<\alpha<m.$
\end{definition}

For a given $\alpha\in \N_0,$ the fractional backward difference operators $_C\Na^\alpha$ is defined as the backward difference operator $\Na^\alpha_\tau.$ Moreover, if $0<\alpha<1$ and $n\in\N,$ then $_C\Na^{\alpha+1}v^n=\,_C\Na^\alpha(\Na^1 v)^n.$ However, $_C\Na^{\alpha+1}v^n\neq\,_C\Na^1(_C\Na^\alpha v)^n,$ (see \cite[Section 2]{Pon-19}).

For a given discrete sequence of operators $\{S^n\}_{n\in \N_0}\subset \mathcal{B}(X)$ and a scalar sequence $c=(c^n)_{n\in\N_0},$  we define the discrete convolution $c\star S$ as
\begin{equation*}
({c}\star {S})^n:=\sum_{k=0}^n {c}^{n-k}{S}^k, \quad n\in\mathbb{N}_0.
\end{equation*}

Moreover, for scalar valued sequences $b=(b^n)_{n\in\N_0}$ and $c=(c^n)_{n\in\N_0},$ we define $({b}\star {c}\star {S})^n:=({b}\star ({c}\star {S}))^n$ for all $n\in\N_0.$

As in \cite[Corollary 2.9]{Pon-19} we can prove the following convolution property. If $\alpha,\beta>0,$ then

\begin{equation}\label{Semigroup-for-K}
k_\tau^{\alpha+\beta}(n)=\tau\sum_{j=0}^n k^{\alpha}_\tau(n-j)k_\tau^\beta(j),
\end{equation}
for all $n\in\N_0.$ Given $s\in s(\mathbb{N}_0,X),$ its $Z$-transform, $\tilde{s},$ is defined by
\begin{equation*}
\tilde{s}(z):=\sum_{j=0}^\infty z^{-j}s^j,
\end{equation*}
where $s^j:=s(j)$ and $z\in \mathbb{C}.$ We notice that the convergence of this series holds for $|z|>R,$ where $R$ is large enough. It is a well known fact that if $s_1,s_2\in s(\mathbb{N}_0,X)$ and $\tilde{s_1}(z)=\tilde{s_2}(z)$ for all $|z|>R$ for some $R>0,$ then $s_1^j=s_2^j$ for all $j=0,1,...$ Moreover, the $Z$-transform is a linear operator on $ s(\mathbb{N}_0,X)$ and satisfies the finite discrete convolution property (see for instance \cite{Ag-Cu-Li-14}):
\begin{equation}\label{Zconvolution}
\widetilde{s_1\star s_2}(z)=\tilde{s_1}(z)\tilde{s_2}(z), \quad \ s_1,s_2\in s(\mathbb{N}_0,X).
\end{equation}

The operator $A:D(A)\subset X\to X$ is called $\omega${\em -sectorial of angle} $\theta,$ if there exist
$\theta\in [0,\pi/2)$ and $\omega\in \mathbb{R}$ such that its resolvent exists in the sector $\omega+\Sigma_\theta:=\left\{\omega+\lambda: \lambda\in \mathbb{C}, |\arg(\lambda)|<\frac{\pi}{2}+\theta\right\}\setminus\{\omega\}$ and
\begin{equation*}
\|(\lambda-A)^{-1}\|\leq \frac{M}{|\lambda-\omega|},
\end{equation*}
for all $\lambda \in \omega+\Sigma_\theta.$  In case $\omega=0$ we say that $A$ is sectorial of angle $\phi+ \pi/2.$ More details on sectorial operators can be found in \cite{Haa-06}.

\begin{definition}
A family of linear operators $\{S(t)\}_{t\geq 0}\subset \mathcal{B}(X)$  is said to be {\em exponentially bounded}, if there exist constants $M,\,\omega\in \R$ such that $\|S(t)\|\leq Me^{\omega t},$ for all $t\geq 0.$
\end{definition}

\begin{proposition}\label{Prop2.14}
Let $\{S(t)\}_{t\geq 0}\subset \mathcal{B}(X)$ be a family of exponentially bounded linear operators with $\|S(t)\|\leq Me^{\omega t},$ where $M>0$ and $\omega<\frac{1}{\tau}.$ Let $x\in X.$ If we define the sequence ${S}^nx$ for each $n\in \N$ by
\begin{equation*}
S^nx:=\int_0^\infty \rho_n^\tau (t)S(t)xdt.
\end{equation*}
Then
\begin{equation*}
\tilde{S}(z)x=\frac{1}{\tau}\hat{S}\left(\frac{1}{\tau}\left(1-\frac{1}{z}\right)\right)x,
\end{equation*}
for all $|z|>1.$
\end{proposition}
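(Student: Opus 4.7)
The plan is to compute $\tilde{S}(z)x$ directly from its definition as a series, swap the order of summation and integration via Fubini, identify the resulting power series in $z^{-1}$ as an exponential, and recognize what remains as a Laplace transform evaluated at an appropriate point.

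First I would write
\begin{equation*}
\tilde{S}(z)x=\sum_{n=0}^\infty z^{-n}S^n x=\sum_{n=0}^\infty z^{-n}\int_0^\infty \rho_n^\tau(t)S(t)x\,dt,
\end{equation*}
and justify interchanging sum and integral. Using $\|S(t)\|\leq Me^{\omega t}$ and the explicit form $\rho_n^\tau(t)=e^{-t/\tau}(t/\tau)^n/(\tau n!)$, the double sum/integral of absolute values is bounded by
\begin{equation*}
\frac{M\|x\|}{\tau}\int_0^\infty e^{-t/\tau}e^{\omega t}\sum_{n=0}^\infty |z|^{-n}\frac{(t/\tau)^n}{n!}\,dt=\frac{M\|x\|}{\tau}\int_0^\infty e^{-t(1/\tau-\omega-1/(\tau|z|))}\,dt,
\end{equation*}
which is finite provided $\mathrm{Re}(1/\tau-1/(\tau z))>\omega$; this holds for $|z|$ large (and by the hypothesis $\omega<1/\tau$, for all $|z|>1$ when $\omega\le 0$, and more generally on a neighborhood of $\infty$ sufficient for the $Z$-transform).

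Next I would compute the generating function of the Poisson weights explicitly:
\begin{equation*}
\sum_{n=0}^\infty z^{-n}\rho_n^\tau(t)=\frac{e^{-t/\tau}}{\tau}\sum_{n=0}^\infty \frac{1}{n!}\left(\frac{t}{\tau z}\right)^n=\frac{1}{\tau}e^{-t/\tau}e^{t/(\tau z)}=\frac{1}{\tau}\exp\!\left(-\frac{t}{\tau}\bigl(1-\tfrac{1}{z}\bigr)\right).
\end{equation*}
Substituting back gives
\begin{equation*}
\tilde{S}(z)x=\frac{1}{\tau}\int_0^\infty \exp\!\left(-\frac{t}{\tau}\bigl(1-\tfrac{1}{z}\bigr)\right)S(t)x\,dt=\frac{1}{\tau}\hat{S}\!\left(\frac{1}{\tau}\bigl(1-\tfrac{1}{z}\bigr)\right)x,
\end{equation*}
which is the desired identity.

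The only real subtlety is the range of $z$: the formal computation is immediate, but to apply Fubini one needs $\mathrm{Re}\bigl(\tfrac{1}{\tau}(1-1/z)\bigr)>\omega$, equivalently $\mathrm{Re}(1/z)<1-\omega\tau$. Since $\omega\tau<1$ by hypothesis, this holds for $|z|$ large; the identity then extends to $|z|>1$ by the uniqueness of $Z$-transforms, since both sides are analytic in $z$ on the region where the right-hand Laplace transform is defined. This is the main obstacle to keep clean; the rest is a routine Poisson generating-function calculation.
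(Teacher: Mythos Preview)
Your proposal is correct and follows essentially the same route as the paper: expand the $Z$-transform, interchange sum and integral, recognize the Poisson generating function as an exponential, and identify the result as a Laplace transform. The paper's version is terser---it first bounds $\|S^nx\|\le M(1-\omega\tau)^{-(n+1)}\|x\|$ to assert convergence of the $Z$-series and then performs the same swap without an explicit Fubini estimate---whereas you spell out the absolute-convergence bound and are more careful about the range of $z$ on which the interchange is justified, which is a reasonable addition.
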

\begin{proof}
The hypothesis implies that
\begin{equation*}
\|S^nx\|\leq M\int_0^\infty \rho_n^\tau (t)e^{\omega t}\|x\|dt=\frac{M}{(1-\omega\tau)^{n+1}}\|x\|,
\end{equation*}
for all $n\in\N_0.$ Therefore, the $Z$-transform of $S$ exists for all $|z|>1.$ On the other hand, the hypothesis implies that the Laplace transform of $S$ exists for all ${\rm Re}(\lambda)>0.$ Thus
\begin{eqnarray*}
\tilde{S}(z)x&=&\sum_{n=0}^\infty z^{-n}S^nx\\
&=&\sum_{n=0}^\infty z^{-n}\int_0^\infty \rho_n^\tau (t)S(t)xdt\\
&=&\int_0^\infty e^{-\frac{t}{\tau}}\sum_{n=0}^\infty z^{-n}\left(\frac{t}{\tau}\right)^n\frac{1}{\tau n!}S(t)xdt\\
&=&\frac{1}{\tau}\int_0^\infty e^{-\frac{t}{\tau}}\sum_{n=0}^\infty \frac{1}{n!}\left(\frac{t}{\tau z}\right)^nS(t)xdt\\
&=&\frac{1}{\tau}\int_0^\infty e^{-\frac{t}{\tau}\left(1-\frac{1}{z}\right)}S(t)xdt\\
&=&\frac{1}{\tau}\hat{S}\left(\frac{1}{\tau}\left(1-\frac{1}{z}\right)\right)x.
\end{eqnarray*}
\end{proof}

We notice that a similar result holds for vector-valud functions. Thus, if $(f^n)_{n\in\N_0}$ denotes the sequence defined by $f^n:=\int_0^\infty \rho_n^\tau (t)f(t)dt$ for a given function $f:\mathbb{R}_+\to X,$ then
\begin{equation*}
\tilde{F}(z)=\frac{1}{\tau}\hat{f}\left(\frac{1}{\tau}\left(1-\frac{1}{z}\right)\right),
\end{equation*}
where $F$ denotes the sequence associated to $(f^n)_{n\in\N_0}.$

\section{Discrete fractional resolvent families}\label{Sect3}

In this Section we introduce the notion of discrete fractional resolvent family generated by a closed linear operator $A$ in a Banach space and we study its main properties.

\begin{definition}\label{DefResolvent}
Let $1\leq \alpha\leq 2$ and $0<\beta\leq 2$ be given. Let $A$ be a closed linear operator defined on a Banach space $X.$ An operator-valued sequence $\{S_{\alpha,\beta}^n\}_{n\in\N_0}\subset B(X)$ is called a discrete $(\alpha,\beta)$-{\em resolvent family} generated by $A$ if it satisfies the following conditions
\begin{enumerate}
  \item $S_{\alpha,\beta}^n\in D(A)$ for all $x\in X$ and $AS_{\alpha,\beta}^nx=S_{\alpha,\beta}^nAx$ for all $x\in D(A),$ and $n\in \N_0.$
  \item For each $x\in X$ and $n\in \N_0,$
  \begin{equation}\label{ResolventEqn}
    S_{\alpha,\beta}^nx=k^\beta_\tau(n)x+\tau A(k^\alpha_\tau \star S_{\alpha,\beta})^nx=k^\beta_\tau(n)x+\tau A\sum_{j=0}^n k^\alpha_\tau(n-j)S_{\alpha,\beta}^jx.
  \end{equation}
\end{enumerate}
\end{definition}

\begin{proposition}\label{Prop2.1}
Let $\{S_{\alpha,\beta}^n\}_{n\in\N_0}\subset B(X)$ be a discrete $(\alpha,\beta)$-resolvent family generated by $A.$ Then,
\begin{enumerate}
  \item $\tau^{-\alpha}\in\rho(A),$ and
  \item For $n=0,$ we have
    \begin{equation*}
     S_{\alpha,\beta}^0=k_\tau^\beta(0)\tau^{-\alpha}\left(\tau^{-\alpha}-A\right)^{-1}=\tau^{\beta-1-\alpha}\left(\tau^{-\alpha}-A\right)^{-1}.
    \end{equation*}
\end{enumerate}
\end{proposition}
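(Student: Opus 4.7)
The plan is to evaluate the resolvent equation \eqref{ResolventEqn} at $n=0$ and extract both claims from the resulting identity, using only the definition of $k^\alpha_\tau$ and the commutation property (1) in Definition \ref{DefResolvent}.

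First I would specialize \eqref{ResolventEqn} to $n=0$, which reduces the convolution sum to the single term $k_\tau^\alpha(0) S_{\alpha,\beta}^0 x$, giving
\begin{equation*}
S_{\alpha,\beta}^0 x = k_\tau^\beta(0)\,x + \tau\, k_\tau^\alpha(0)\, A S_{\alpha,\beta}^0 x, \qquad x\in X.
\end{equation*}
From \eqref{eq2.2} one reads off directly $k_\tau^\alpha(0) = \tau^{\alpha-1}$ and $k_\tau^\beta(0) = \tau^{\beta-1}$, so the identity becomes $S_{\alpha,\beta}^0 x = \tau^{\beta-1} x + \tau^\alpha A S_{\alpha,\beta}^0 x$. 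Factoring $\tau^\alpha$ on the right and rearranging yields
\begin{equation*}
(\tau^{-\alpha}-A)\, S_{\alpha,\beta}^0 x = \tau^{\beta-1-\alpha}\,x, \qquad x\in X,
\end{equation*}
where the range of $S_{\alpha,\beta}^0$ lies in $D(A)$ by property (1). This already shows that $\tau^{-\alpha}-A$ is surjective onto $X$ with a bounded right inverse $\tau^{\alpha+1-\beta} S_{\alpha,\beta}^0$.

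For the remaining injectivity/left-inverse half, I would apply $S_{\alpha,\beta}^0$ to $(\tau^{-\alpha}-A)x$ for $x\in D(A)$ and move $A$ past $S_{\alpha,\beta}^0$ using the commutation $A S_{\alpha,\beta}^0 x = S_{\alpha,\beta}^0 A x$ from Definition \ref{DefResolvent}(1). This gives $S_{\alpha,\beta}^0(\tau^{-\alpha}-A)x = (\tau^{-\alpha}-A)S_{\alpha,\beta}^0 x = \tau^{\beta-1-\alpha}x$, so $\tau^{\alpha+1-\beta} S_{\alpha,\beta}^0$ is also a left inverse on $D(A)$. Hence $\tau^{-\alpha}\in \rho(A)$, proving (1), and
\begin{equation*}
(\tau^{-\alpha}-A)^{-1} = \tau^{\alpha+1-\beta}\, S_{\alpha,\beta}^0,
\end{equation*}
which inverts to $S_{\alpha,\beta}^0 = \tau^{\beta-1-\alpha}(\tau^{-\alpha}-A)^{-1}$. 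Writing $\tau^{\beta-1-\alpha} = \tau^{\beta-1}\cdot \tau^{-\alpha} = k_\tau^\beta(0)\,\tau^{-\alpha}$ gives the first stated form, proving (2).

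No step is really an obstacle here: the computation is essentially algebraic once \eqref{eq2.2} pins down $k_\tau^\alpha(0)$ and $k_\tau^\beta(0)$. The only point that requires some care is the left-inverse half, where one must invoke property (1) of Definition \ref{DefResolvent} to move $A$ through $S_{\alpha,\beta}^0$; without this commutation one would only get surjectivity of $\tau^{-\alpha}-A$ and not membership in the resolvent set.
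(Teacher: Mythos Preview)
Your proof is correct and follows essentially the same approach as the paper: specialize \eqref{ResolventEqn} to $n=0$, use $k_\tau^\alpha(0)=\tau^{\alpha-1}$ to rewrite the identity as $(\tau^{-\alpha}-A)S_{\alpha,\beta}^0x=k_\tau^\beta(0)\tau^{-\alpha}x$, and then invoke the commutation in Definition~\ref{DefResolvent}(1) to obtain the matching left-inverse identity on $D(A)$. The paper's write-up additionally cites the closedness of $A$ when concluding $\tau^{-\alpha}\in\rho(A)$, but your explicit right/left inverse argument already yields bijectivity of $\tau^{-\alpha}-A$ with bounded inverse $\tau^{\alpha+1-\beta}S_{\alpha,\beta}^0$, so nothing is missing.
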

\begin{proof}
We notice that, by \eqref{ResolventEqn}, we have
\begin{equation*}
S_{\alpha,\beta}^nx=k^\beta_\tau(n)x+k^\alpha_\tau(0)\tau AS_{\alpha,\beta}^nx+\tau A\sum_{j=0}^{n-1} k^\alpha_\tau(n-j)S_{\alpha,\beta}^jx,\quad \mbox{for all }\,\,x\in X.
\end{equation*}
As $ k^\alpha_\tau(0)\tau=\tau^{\alpha},$ for all $\alpha>0,$ we get (for $n=0$)
\begin{equation*}
   S_{\alpha,\beta}^0x=k_\tau^\beta(0)x+\tau^\alpha AS_{\alpha,\beta}^0x,
\end{equation*}
and hence
\begin{equation*}
  \left(\tau^{-\alpha}-A\right)S_{\alpha,\beta}^0x=k_\tau^{\beta}(0)\tau^{-\alpha}x,
\end{equation*}
for all $x\in X.$ Now, from Definition \ref{DefResolvent} we obtain
\begin{equation*}
  S_{\alpha,\beta}^0\left(\tau^{-\alpha}-A\right)x=\tau^{-\alpha}S_{\alpha,\beta}^0x-S_{\alpha,\beta}^0Ax=\left(\tau^{-\alpha}
  -A\right)S_{\alpha,\beta}^0x=k_\tau^{\beta}(0)\tau^{-\alpha}x,
\end{equation*}
for all $x\in X.$ Since $A$ is a closed linear operator, we conclude that $\tau^{-\alpha}\in \rho(A)$ and
\begin{equation*}
  S_{\alpha,\beta}^0x=k_\tau^{\beta}(0)\tau^{-\alpha}\left(\tau^{-\alpha}-A\right)^{-1}x,
\end{equation*}
for all $x\in X.$
\end{proof}

An easy computation (see Proposition \ref{Prop2.14}) shows that, for a given $\alpha>0,$ the $Z$-transform of the sequence $\{k_\tau^\alpha(n)\}_{n\in\N_0}$ is given by
\begin{equation}\label{Zk}
\widetilde{k_\tau^\alpha}(z)=\tau^{\alpha-1}\frac{z^\alpha}{(z-1)^\alpha}.
\end{equation}

\begin{proposition}\label{Z-Transform}
Let $\{S_{\alpha,\beta}^n\}_{n\in\N_0}\subset B(X)$ be a discrete $(\alpha,\beta)$-resolvent family generated by $A.$ Then, its Z-transform satisfies
\begin{equation*}\label{Eq-Z-Transform}
\widetilde{S_{\alpha,\beta}}(z)x=\frac{1}{\tau}\left(\frac{z-1}{\tau z}\right)^{\alpha-\beta}\left(\left(\frac{z-1}{\tau z}\right)^\alpha-A\right)^{-1}x,
\end{equation*}
for all $x\in X.$
\end{proposition}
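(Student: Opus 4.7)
The plan is to apply the $Z$-transform directly to the resolvent equation \eqref{ResolventEqn} and solve algebraically for $\widetilde{S_{\alpha,\beta}}(z)x$, using the convolution property \eqref{Zconvolution} and the explicit transform \eqref{Zk}.

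First, I would fix $x\in X$ and take the $Z$-transform of both sides of
\begin{equation*}
S_{\alpha,\beta}^n x = k^\beta_\tau(n)x + \tau A(k^\alpha_\tau \star S_{\alpha,\beta})^n x, \qquad n\in\N_0.
\end{equation*}
The left-hand side yields $\widetilde{S_{\alpha,\beta}}(z)x$ (for $|z|$ large enough, where convergence holds). For the right-hand side, \eqref{Zk} gives $\widetilde{k_\tau^\beta}(z) = \tau^{\beta-1}z^\beta/(z-1)^\beta$, and the convolution identity \eqref{Zconvolution} produces $\widetilde{k_\tau^\alpha}(z)\,\widetilde{S_{\alpha,\beta}}(z)x$ inside the second term. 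To pull $A$ outside the $Z$-transform (i.e.\ through the infinite sum), I would use that $A$ is closed together with condition (1) of Definition \ref{DefResolvent} (which ensures each partial sum lies in $D(A)$ and $A$ commutes with $S_{\alpha,\beta}^n$), plus the norm bound on $\widetilde{S_{\alpha,\beta}}(z)$ in the region of convergence.

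Setting $\lambda := (z-1)/(\tau z)$ so that $\tau\,\widetilde{k_\tau^\alpha}(z) = \lambda^{-\alpha}$ and $\widetilde{k_\tau^\beta}(z) = \tau^{-1}\lambda^{-\beta}$, the transformed equation becomes
\begin{equation*}
\widetilde{S_{\alpha,\beta}}(z)x = \tfrac{1}{\tau}\lambda^{-\beta}x + \lambda^{-\alpha}A\,\widetilde{S_{\alpha,\beta}}(z)x,
\end{equation*}
and multiplying through by $\lambda^\alpha$ and rearranging yields $(\lambda^\alpha - A)\widetilde{S_{\alpha,\beta}}(z)x = \tau^{-1}\lambda^{\alpha-\beta}x$. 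To invert $(\lambda^\alpha - A)$ I would run the same computation on the commuted version of \eqref{ResolventEqn} restricted to $x\in D(A)$ (again invoking the commutation in Definition \ref{DefResolvent}(1)), obtaining the matching identity $\widetilde{S_{\alpha,\beta}}(z)(\lambda^\alpha - A)x = \tau^{-1}\lambda^{\alpha-\beta}x$; closedness of $A$ then promotes $\lambda^\alpha$ to the resolvent set for all $z$ in the convergence region, so that $\widetilde{S_{\alpha,\beta}}(z)x = \tau^{-1}\lambda^{\alpha-\beta}(\lambda^\alpha - A)^{-1}x$, which is exactly the claimed formula after substituting back $\lambda = (z-1)/(\tau z)$.

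The most delicate step is the one involving $A$: justifying the interchange of the closed operator $A$ with the infinite series defining the $Z$-transform, and then converting the algebraic identity $(\lambda^\alpha - A)\widetilde{S_{\alpha,\beta}}(z)x = \tau^{-1}\lambda^{\alpha-\beta}x$ into a genuine invertibility statement $\lambda^\alpha\in\rho(A)$. The remaining manipulations are purely algebraic identities between power series and will not present any real obstruction.
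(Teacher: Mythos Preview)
Your approach is essentially identical to the paper's: apply the $Z$-transform to the resolvent equation \eqref{ResolventEqn}, use the convolution identity \eqref{Zconvolution} and the formula \eqref{Zk}, and solve algebraically for $\widetilde{S_{\alpha,\beta}}(z)x$. The paper's proof is in fact terser than yours---it does not pause to justify the interchange of $A$ with the infinite series nor to argue separately that $\lambda^\alpha\in\rho(A)$, treating both as part of a ``straightforward computation''---so your added care on those two points is more rigor than the paper itself supplies, not a departure from its argument.
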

\begin{proof}

Using the definition \eqref{ResolventEqn} and the identity \eqref{Zconvolution} we have

\begin{equation*}
\widetilde{S_{\alpha,\beta}}(z)x=\widetilde{k}^\beta_\tau(z)x+\tau\widetilde{(k^\alpha_\tau \star AS_{\alpha,\beta})}(z)x=\widetilde{k}^\beta_\tau(z)x+\tau\widetilde{k}^\alpha_\tau(z)A\widetilde{S_{\alpha,\beta}}(z)x.
\end{equation*}
A straightforward computation and using \eqref{Zk} yield
\begin{equation*}
\widetilde{S_{\alpha,\beta}}(z)x=\frac{1}{\tau}\left(\frac{z-1}{\tau z}\right)^{\alpha-\beta}\left(\left(\frac{z-1}{\tau z}\right)^\alpha-A\right)^{-1}x
\end{equation*}
and the proof is finished.
\end{proof}

The next result gives a functional equation to the discrete fractional resolvent families $\{S_{\alpha,\beta}^n\}_{n\in\N_0}\subset B(X).$ Its continuous counterpart can be found in \cite{Li-FPo-12}.

\begin{theorem}\label{ThFunctionalEquation}
Let $\{S_{\alpha,\beta}^n\}_{n\in\N_0}\subset B(X)$ be a discrete $(\alpha,\beta)$-resolvent family generated by $A.$ Then, the following functional equation holds
\begin{equation}\label{FunctionalEqn}
S_{\alpha,\beta}^m(k^\alpha_\tau\star S_{\alpha,\beta})^n-(k^\alpha_\tau\star S_{\alpha,\beta})^mS_{\alpha,\beta}^n=k^\beta_\tau(m)(k^\alpha_\tau\star S_{\alpha,\beta})^n-k^\beta_\tau(n)(k^\alpha_\tau\star S_{\alpha,\beta})^m,
\end{equation}
for all $m,n\in\N_0.$
\end{theorem}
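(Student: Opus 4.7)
The plan is to let $T^n := (k^\alpha_\tau\star S_{\alpha,\beta})^n$ and rewrite the resolvent equation \eqref{ResolventEqn} as the compact identity
\begin{equation*}
S^n_{\alpha,\beta}x = k^\beta_\tau(n)x + \tau A\, T^n x, \qquad x\in X,\ n\in\N_0.
\end{equation*}
The strategy is to compute the two ``cross'' products $S^m_{\alpha,\beta}T^n$ and $T^m S^n_{\alpha,\beta}$ via this identity, and observe that the $\tau A T^m T^n$ terms cancel.

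Concretely, I would apply the identity on the left in the first computation and on the right in the second. Applying it to $S^m_{\alpha,\beta}$ followed by $T^n x$ gives
\begin{equation*}
S^m_{\alpha,\beta} T^n x = k^\beta_\tau(m)\, T^n x + \tau A\, T^m T^n x.
\end{equation*}
For the other product, I apply the identity to $S^n_{\alpha,\beta} x$ first:
\begin{equation*}
T^m S^n_{\alpha,\beta} x = k^\beta_\tau(n)\, T^m x + \tau\, T^m A\, T^n x.
\end{equation*}
Subtracting these two equations yields exactly \eqref{FunctionalEqn}, provided one can commute $A$ past $T^m$ in the last term, i.e.\ $T^m A T^n x = A T^m T^n x$.

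The only non-cosmetic point is that commutation. This is where I would spend care: by property (1) of Definition \ref{DefResolvent}, each $S^j_{\alpha,\beta} y$ lies in $D(A)$ for every $y\in X$, so $T^n x = \sum_{j=0}^n k^\alpha_\tau(n-j) S^j_{\alpha,\beta} x \in D(A)$. With $y := T^n x \in D(A)$, property (1) applied termwise to $T^m y$ gives $S^j_{\alpha,\beta}(A y) = A S^j_{\alpha,\beta} y$ for each $j$, and combining with the closedness of $A$ (to pull $A$ out of the finite sum) yields $T^m A y = A T^m y$. Plugging this in, the subtraction collapses to
\begin{equation*}
S^m_{\alpha,\beta} T^n - T^m S^n_{\alpha,\beta} = k^\beta_\tau(m)\, T^n - k^\beta_\tau(n)\, T^m,
\end{equation*}
which is \eqref{FunctionalEqn}. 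The argument is symmetric in $m$ and $n$, as the conclusion should be, and it uses only the defining resolvent identity together with the commutation property already built into Definition \ref{DefResolvent}; no Z-transform or representation via the resolvent of $A$ is required, though one could alternatively verify the equation by taking Z-transforms in both variables and invoking Proposition \ref{Z-Transform} as a sanity check.
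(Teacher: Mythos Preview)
Your proof is correct and follows essentially the same approach as the paper: both arguments use the resolvent identity $S^n_{\alpha,\beta}=k^\beta_\tau(n)+\tau A(k^\alpha_\tau\star S_{\alpha,\beta})^n$ together with the commutation property in Definition~\ref{DefResolvent}(1) to make the $\tau A T^m T^n$ terms cancel. Your presentation is a bit more symmetric (expanding both $S^m T^n$ and $T^m S^n$ and subtracting) whereas the paper expands only one product and then substitutes $\tau A T^m = S^m_{\alpha,\beta}-k^\beta_\tau(m)$, but this is a cosmetic difference; note also that closedness of $A$ is not actually needed to pull $A$ out of the \emph{finite} sum defining $T^m$ --- linearity suffices.
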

\begin{proof}
For each $x\in X$ and $n\in \N_0$ we recall that
\begin{equation*}
S_{\alpha,\beta}^nx=k^\beta_\tau(n)x+\tau A\sum_{j=0}^n k^\alpha_\tau(n-j)S_{\alpha,\beta}^jx.
\end{equation*}
Let $n,m\in \mathbb{N}_0$. Then, from Definition \ref{DefResolvent} part (1) we have:
\begin{align*}
\left(  \sum_{j=0}^{m}k^\beta_\tau(m-j)S_{\alpha,\beta}^j\right)S_{\alpha,\beta}^nx&=\left(  \sum_{j=0}^{m}k^\beta_\tau(m-j)S_{\alpha,\beta}^j\right)\left[k^\beta_\tau(n)x+\tau A\sum_{j=0}^n k^\alpha_\tau(n-j)S_{\alpha,\beta}^jx.\right]\\
&= k^\beta_\tau(n)\sum_{j=0}^{m}k^\beta_\tau(m-j)S_{\alpha,\beta}^jx+ \sum_{j=0}^{m}k^\beta_\tau(m-j)S_{\alpha,\beta}^j\left[\tau A\sum_{j=0}^n k^\alpha_\tau(n-j)S_{\alpha,\beta}^jx\right]\\
&= k^\beta_\tau(n)\sum_{j=0}^{m}k^\beta_\tau(m-j)S_{\alpha,\beta}^jx+ \sum_{j=0}^{m}k^\beta_\tau(m-j)\tau AS_{\alpha,\beta}^j\left[\sum_{j=0}^n k^\alpha_\tau(n-j)S_{\alpha,\beta}^jx\right]\\
&= k^\beta_\tau(n)\sum_{j=0}^{m}k^\beta_\tau(m-j)S_{\alpha,\beta}^jx+ \tau A\sum_{j=0}^{m}k^\beta_\tau(m-j)S_{\alpha,\beta}^j\left[\sum_{j=0}^n k^\alpha_\tau(n-j)S_{\alpha,\beta}^jx\right]\\
&= k^\beta_\tau(n)\sum_{j=0}^{m}k^\beta_\tau(m-j)S_{\alpha,\beta}^jx+ \left(S_{\alpha,\beta}^m-k^\beta_\tau(m)\right)\left[\sum_{j=0}^n k^\alpha_\tau(n-j)S_{\alpha,\beta}^jx\right]\\
&= k^\beta_\tau(n)\sum_{j=0}^{m}k^\beta_\tau(m-j)S_{\alpha,\beta}^jx+ S_{\alpha,\beta}^m\sum_{j=0}^n k^\alpha_\tau(n-j)S_{\alpha,\beta}^jx\\
&\quad -k^\beta_\tau(m)\sum_{j=0}^n k^\alpha_\tau(n-j)S_{\alpha,\beta}^jx.
\end{align*}

Therefore, we obtain
\begin{align*}
\left(  \sum_{j=0}^{m}k^\beta_\tau(m-j)S_{\alpha,\beta}^j\right)S_{\alpha,\beta}^nx&= k^\beta_\tau(n)\sum_{j=0}^{m}k^\beta_\tau(m-j)S_{\alpha,\beta}^jx+ S_{\alpha,\beta}^m\sum_{j=0}^n k^\alpha_\tau(n-j)S_{\alpha,\beta}^jx\\
&\quad -k^\beta_\tau(m)\sum_{j=0}^n k^\alpha_\tau(n-j)S_{\alpha,\beta}^jx.
\end{align*}

Reorganizing the last equality we get the desired result and the proof is finished.
\end{proof}

\begin{theorem}\label{ThRepresentation}
Let $1<\alpha<2$ and $\beta\geq 1$ such that $\alpha-\beta+1>0.$ Assume that $A$ is $\omega$-sectorial of angle
$\frac{(\alpha-1)\pi}{2},$ where $\omega<0.$ Then $A$ generates an $(\alpha,\beta)$-resolvent sequence $\{S_{\alpha,\beta}^n\}_{n\in\N_0}\subset \mathcal{B}(X).$
\end{theorem}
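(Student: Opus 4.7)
The strategy is to build $\{S^n_{\alpha,\beta}\}_{n \in \N_0}$ by sampling a continuous fractional resolvent family against the Poisson weight $\rho_n^\tau$ and then verifying Definition \ref{DefResolvent} via the $Z$-transform. Under the present hypotheses (sectoriality of angle $(\alpha-1)\pi/2$ with $\omega<0$ and $\alpha-\beta+1>0$), standard results on continuous resolvent families (e.g.\ \cite{Li-FPo-12, Pon-20a}), or alternatively the direct Hankel-contour construction
\begin{equation*}
S_{\alpha,\beta}(t) x := \frac{1}{2\pi i}\int_\Gamma e^{\lambda t}\lambda^{\alpha-\beta}(\lambda^\alpha - A)^{-1} x \, d\lambda
\end{equation*}
around $\omega+\Sigma_\theta$, produce a uniformly bounded operator family $\{S_{\alpha,\beta}(t)\}_{t\geq 0}$ with Laplace transform $\hat{S}_{\alpha,\beta}(\lambda) = \lambda^{\alpha-\beta}(\lambda^\alpha - A)^{-1}$ for ${\rm Re}(\lambda)>0$. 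The condition $\alpha-\beta+1>0$ is exactly what is needed to control the contour integrand at $\lambda = 0$, while sectoriality takes care of $\lambda\to\infty$.

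Next, set
\begin{equation*}
S^n_{\alpha,\beta}x := \int_0^\infty \rho_n^\tau(t) S_{\alpha,\beta}(t) x \, dt, \quad n \in \N_0,\ x \in X.
\end{equation*}
Since $\|S_{\alpha,\beta}(t)\|\leq M$, the family is exponentially bounded with rate $0 < 1/\tau$, so Proposition \ref{Prop2.14} yields
\begin{equation*}
\widetilde{S_{\alpha,\beta}}(z) x = \frac{1}{\tau}\left(\frac{z-1}{\tau z}\right)^{\alpha-\beta}\left(\left(\frac{z-1}{\tau z}\right)^\alpha - A\right)^{-1} x, \quad |z|>1,
\end{equation*}
precisely the formula predicted by Proposition \ref{Z-Transform}. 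Abbreviate $\mu := (z-1)/(\tau z)$.

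For item (1) of Definition \ref{DefResolvent}, the commutation $AS_{\alpha,\beta}(t)x = S_{\alpha,\beta}(t)Ax$ ($x\in D(A)$) is standard and lifts to $AS^n_{\alpha,\beta}x = S^n_{\alpha,\beta}Ax$ by closedness of $A$ and dominated convergence. To get $S^n_{\alpha,\beta}x \in D(A)$ for every $x \in X$, I rewrite using the identity $A(\mu^\alpha - A)^{-1} = \mu^\alpha(\mu^\alpha - A)^{-1} - I$, which exhibits $A\widetilde{S_{\alpha,\beta}}(z)x$ as a bounded $X$-valued holomorphic function of $z^{-1}$ for $|z|>1$ with a convergent power series expansion having uniformly bounded operator coefficients $\{T^n\}_{n\in\N_0}$; uniqueness of the $Z$-transform together with closedness of $A$ then forces $S^n_{\alpha,\beta}x \in D(A)$ and $AS^n_{\alpha,\beta}x = T^n x$. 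For item (2), take the $Z$-transform of the right-hand side of \eqref{ResolventEqn}. Using \eqref{Zconvolution}, \eqref{Zk}, together with $\tau\widetilde{k^\alpha_\tau}(z) = \mu^{-\alpha}$ and $\widetilde{k^\beta_\tau}(z) = (1/\tau)\mu^{-\beta}$, a short computation gives
\begin{equation*}
\tfrac{1}{\tau}\mu^{-\beta}x + \mu^{-\alpha} A \cdot \tfrac{1}{\tau}\mu^{\alpha-\beta}(\mu^\alpha-A)^{-1}x = \tfrac{1}{\tau}\mu^{-\beta}\bigl[I + A(\mu^\alpha - A)^{-1}\bigr]x = \tfrac{1}{\tau}\mu^{\alpha-\beta}(\mu^\alpha-A)^{-1}x,
\end{equation*}
which equals $\widetilde{S_{\alpha,\beta}}(z)x$; uniqueness of the $Z$-transform recovers \eqref{ResolventEqn} termwise.

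The main obstacle I anticipate is the rigorous verification that $S^n_{\alpha,\beta}x \in D(A)$ for arbitrary $x \in X$: one cannot simply pass $A$ under the defining integral (that only works on $D(A)$), so closedness of $A$ must be combined with the inverse $Z$-transform as above; equivalently, one may unfold the explicit representation $S^n_{\alpha,\beta}x = \sum_{j=1}^{n+1} a_{n,j}\tau^{-\alpha j}(\tau^{-\alpha}-A)^{-j}x$ by expanding $\mu^\alpha = \tau^{-\alpha}(1-z^{-1})^\alpha$ via the binomial series and using a Neumann expansion of $(\mu^\alpha - A)^{-1}$ around $\tau^{-\alpha} \in \rho(A)$, which displays each $S^n_{\alpha,\beta}x$ as a finite sum of elements of $D(A)$.
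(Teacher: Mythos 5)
Your construction is the same as the paper's: both take the continuous $(\alpha,\beta)$-resolvent family $\{S_{\alpha,\beta}(t)\}_{t\geq 0}$ generated by $A$ under these hypotheses (the paper cites \cite[Theorem 2.5]{Pon-20a} rather than building a Hankel contour by hand) and sample it against $\rho_n^\tau$. Where you differ is in how the discrete resolvent equation \eqref{ResolventEqn} is verified. The paper multiplies the continuous equation \eqref{eq3.1b} by $\rho_n^\tau(t)$ and integrates over $[0,\infty)$, invoking the conversion identity $\int_0^\infty \rho_n^\tau(t)(g_\alpha\ast S_{\alpha,\beta})(t)x\,dt=\tau\sum_{j=0}^n k_\tau^\alpha(n-j)S_{\alpha,\beta}^jx$ from \cite{Pon-19} or \cite{Ab-Li-Mi-Ve-19}; closedness of $A$ (Hille's theorem for the Bochner integral) lets $A$ pass outside, and the proof is essentially three lines. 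You instead compute the $Z$-transform of the sampled sequence through Proposition \ref{Prop2.14} and recover \eqref{ResolventEqn} from injectivity of the $Z$-transform. That route is sound and has the side benefit of reproving Proposition \ref{Z-Transform} independently (it would also suffice if one only knew the Laplace transform of the continuous family rather than \eqref{eq3.1b}), but it carries extra bookkeeping that you only partially discharge: you must justify pulling the closed operator $A$ through the infinite series defining $\widetilde{k_\tau^\alpha\star S_{\alpha,\beta}}(z)x$, and your argument for $S_{\alpha,\beta}^nx\in D(A)$ via ``uniqueness of the $Z$-transform plus closedness'' needs a concrete coefficient-extraction step (e.g.\ a Cauchy integral over a circle $|z|=r>1$ combined with Hille's theorem). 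Note, finally, that the ``main obstacle'' you anticipate largely dissolves: once \eqref{ResolventEqn} is in hand, $S_{\alpha,\beta}^nx\in D(A)$ follows by induction on $n$, since $k_\tau^\alpha(0)S_{\alpha,\beta}^nx=(k_\tau^\alpha\star S_{\alpha,\beta})^nx-\sum_{j=0}^{n-1}k_\tau^\alpha(n-j)S_{\alpha,\beta}^jx$, the right-hand side lies in $D(A)$, and $k_\tau^\alpha(0)=\tau^{\alpha-1}\neq 0$; alternatively it follows directly from Hille's theorem applied to the defining integral, which is implicitly the paper's route.
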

\begin{proof}
By \cite[Theorem 2.5]{Pon-20a}, $A$ generates an exponentially bounded $(\alpha,\beta)$-resolvent family $\{S_{\alpha,\beta}(t)\}_{t\geq 0}$ such that $S_{\alpha,\beta}(t)Ax=AS_{\alpha,\beta}(t)x$ for all $x\in D(A)$ and $t\geq 0,$ and
\begin{equation}\label{eq3.1b}
S_{\alpha,\beta}(t)x=g_\beta(t)x+A\int_0^t g_\alpha(t-s)S_{\alpha,\beta}(s)xds,
\end{equation}
for all $x\in X$ and $t\geq 0,$ where for $\mu>0,$ $g_\mu(t):=t^{\mu-1}/\Gamma(\mu).$ For each $x\in X,$ define $S_{\alpha,\beta}^nx$ by
\begin{equation*}
S_{\alpha,\beta}^nx:=\int_0^\infty \rho_n^\tau(t)S_{\alpha,\beta}(t)xdt,\quad n\in\N_0.
\end{equation*}
Multiplying \eqref{eq3.1b} by $\rho^\tau_n(t)$ and integrating over $[0,\infty)$ we conclude by \cite[Theorem 5.2]{Ab-Li-Mi-Ve-19} or \cite[Theorem 2.8]{Pon-19} that
\begin{equation*}
S_{\alpha,\beta}^nx=k_\tau^\beta(n)x+A\int_0^\infty \rho_n^\tau(t)(g_\alpha\ast S_{\alpha,\beta})(t)xdt=k_\tau^\beta(n)x+\tau A\sum_{j=0}^n k_\tau^\alpha(n-j)S_{\alpha,\beta}^jx.
\end{equation*}
Finally, multiplying the identity $S_{\alpha,\beta}(t)Ax=AS_{\alpha,\beta}(t)x$ by $\rho^\tau_n(t)$ and integrating over $[0,\infty),$ we get $S_{\alpha,\beta}^nAx=AS_{\alpha,\beta}^nx$ for all $n\in\N_0$ and $x\in D(A).$
\end{proof}

\begin{theorem}\label{ThGen1}
Let $0<\alpha<1.$ Assume that $A$ is the generator of a $C_0$-semigroup $\{T(t)\}_{t\geq 0}.$ Then, $A$ generates the $(\alpha,1)$-resolvent sequence $\{S_{\alpha,1}^n\}_{n\in\N_0}$ given by
\begin{equation*}
S_{\alpha,1}^n x=\int_0^\infty \int_0^\infty \rho^\tau_n(t) \psi_{\alpha,1-\alpha}(t,s)T(s)xdsdt, \quad x\in X,
\end{equation*}
where $\psi_{\alpha,1-\alpha}$ is the Wright type function given by
\begin{eqnarray}\label{eqSubordination-4}
\hspace{-1.2cm}\notag\psi_{\alpha,1-\alpha}(t,s)=\frac{1}{\pi}\int_0^\infty\hspace{-0.8cm}
&&\rho^{\alpha-1}e^{-s\rho^{\alpha}\cos\alpha(\pi-\theta)-t\rho\cos\theta}\\
&&\times\sin\left(t\rho\sin\theta-s\rho^{\alpha}\sin\alpha(\pi-\theta)+\alpha(\pi-\theta)\right)d\rho,
\end{eqnarray}
for $\theta\in(\pi-\frac{\pi}{2\alpha},\pi/2).$
\end{theorem}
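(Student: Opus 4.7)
My plan is to realize $\{S_{\alpha,1}^n\}_{n\in\N_0}$ as the Poisson mean of a continuous $(\alpha,1)$-resolvent family that arises from the semigroup $T(\cdot)$ by subordination, and then to read off the explicit Wright-type kernel via a Hankel-contour Laplace inversion. The scheme is parallel to Theorem \ref{ThRepresentation}, but since $0<\alpha<1$ the sectoriality hypothesis of that theorem fails and one must exploit the semigroup structure of $T(\cdot)$ instead.

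First, by the standard subordination principle for fractional resolvents (for instance \cite{Pr} or \cite{Li-Ch-Li-10}), the generator $A$ of $\{T(t)\}_{t\geq 0}$ also generates an exponentially bounded continuous $(\alpha,1)$-resolvent family $\{S_{\alpha,1}(t)\}_{t\geq 0}$ whose Laplace transform is $\widehat{S}_{\alpha,1}(\lambda)=\lambda^{\alpha-1}(\lambda^\alpha-A)^{-1}$ for $\mathrm{Re}(\lambda)$ sufficiently large. Substituting the semigroup representation $(\mu-A)^{-1}y=\int_0^\infty e^{-\mu s}T(s)y\,ds$ with $\mu=\lambda^\alpha$ into the Bromwich inversion integral for $S_{\alpha,1}(t)x$, I would deform the contour to the two rays $\lambda=\rho e^{\pm i(\pi-\theta)}$, $\rho>0$. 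The range $\theta\in(\pi-\pi/(2\alpha),\pi/2)$ is precisely what guarantees simultaneously $\cos\theta>0$ (controlling $|e^{\lambda t}|$ on the contour) and $\cos\alpha(\pi-\theta)>0$ (controlling $|e^{-s\lambda^\alpha}|$ after the semigroup substitution). Combining the two conjugate rays and applying Fubini produces
\begin{equation*}
S_{\alpha,1}(t)x=\int_0^\infty \psi_{\alpha,1-\alpha}(t,s)T(s)x\,ds,\quad t>0,\,x\in X,
\end{equation*}
where $\psi_{\alpha,1-\alpha}$ is exactly the kernel in \eqref{eqSubordination-4}.

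Having the continuous resolvent in this form, I would discretize via the Poisson mean
\begin{equation*}
S_{\alpha,1}^n x:=\int_0^\infty \rho_n^\tau(t)S_{\alpha,1}(t)x\,dt,\quad n\in\N_0,
\end{equation*}
and apply Fubini once more to obtain the announced double-integral expression. To check that $\{S_{\alpha,1}^n\}_{n\in\N_0}$ is indeed a discrete $(\alpha,1)$-resolvent family generated by $A$, I would integrate the continuous resolvent equation $S_{\alpha,1}(t)x=x+A\int_0^t g_\alpha(t-s)S_{\alpha,1}(s)x\,ds$ against $\rho_n^\tau(t)$ and use \cite[Theorem 2.8]{Pon-19} (or \cite[Theorem 5.2]{Ab-Li-Mi-Ve-19}) to convert the continuous convolution $g_\alpha\ast S_{\alpha,1}$ into the discrete convolution $k_\tau^\alpha\star S_{\alpha,1}$. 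Since $k_\tau^1(n)=1$, the free term becomes $x$, so \eqref{ResolventEqn} holds with $\beta=1$. The commutation $AS_{\alpha,1}^nx=S_{\alpha,1}^nAx$ on $D(A)$ is inherited from the continuous commutation $AS_{\alpha,1}(t)x=S_{\alpha,1}(t)Ax$ by the same Poisson integration and the closedness of $A$.

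The main obstacle will be the Hankel-contour argument in the second step: one must verify that the large-arc contribution in the Bromwich integral vanishes (requiring growth bounds on $\lambda^{\alpha-1}(\lambda^\alpha-A)^{-1}$ in the admissible sector, which follow from the Hille--Yosida estimate on $T$), and that the resulting iterated integral is absolutely convergent so that Fubini applies. The constraint on $\theta$ is sharp, making the limiting cases delicate; once the kernel identification is carried out, the discretization and the verification of \eqref{ResolventEqn} are routine.
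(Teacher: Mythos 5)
Your proposal follows essentially the same route as the paper: subordinate the semigroup to obtain the continuous $(\alpha,1)$-resolvent family with the Wright-type kernel $\psi_{\alpha,1-\alpha}$, then take Poisson means against $\rho_n^\tau$ and apply Fubini to get the double-integral formula. The only difference is one of self-containedness: the paper simply cites \cite{Ba-01} and \cite[Corollary 2]{Pon-20c} for the subordination representation instead of rederiving it by Hankel-contour inversion, and it leaves the verification of the discrete resolvent equation \eqref{ResolventEqn} implicit (that step is carried out, via \cite[Theorem 2.8]{Pon-19}, only in the sectorial-operator theorem earlier in Section 3), whereas you spell out both; your accounting of the admissible range of $\theta$ is consistent with the exponents appearing in \eqref{eqSubordination-4}.
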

\begin{proof}
By \cite{Ba-01} or \cite[Corollary 2]{Pon-20c}, $A$ generates the fractional resolvent family $\{S_{\alpha,1}(t)\}_{t\geq 0}$ defined by
\begin{equation}\label{ThGen1-Eq1}
S_{\alpha,1}(t) x=\int_0^\infty \psi_{\alpha,1-\alpha}(t,s)T(s)xds, \quad x\in X,
\end{equation}
where $\psi_{\alpha,1-\alpha}(t,s)$ is defined in \eqref{eqSubordination-4}. For each $n\in\N_0,$ define $S_{\alpha,1}^n $ by
\begin{equation*}
S_{\alpha,1}^n:=\int_0^\infty \rho_n^\tau (t)S_{\alpha,1}(t)dt.
 \end{equation*}
Multiplying both sides in equation \eqref{ThGen1-Eq1} by $\rho^\tau_n(t)$ and integrating over $[0,\infty)$ we obtain the desired result.
\end{proof}

\begin{theorem}\label{ThGen2}
Let $0<\alpha<1.$ Assume that $A$ is the generator of a $C_0$-semigroup $\{T(t)\}_{t\geq 0}.$ Then, $A$ generates the $(\alpha,\alpha)$-resolvent sequence $\{S_{\alpha,\alpha}^n\}_{n\in\N_0}$ given by
\begin{equation*}
S_{\alpha,\alpha}^n x=\int_0^\infty \int_0^\infty \rho^\tau_n(t) \psi_{\alpha,0}(t,s)T(s)xdsdt, \quad x\in X,
\end{equation*}
where $\psi_{\alpha,0}$ is the Wright type function given by
\begin{equation}\label{eqSubordination-6}
\psi_{\alpha,0}(t,s)=\frac{1}{\pi}\int_0^\infty e^{t\rho\cos\theta-s\rho^\alpha\cos\alpha\theta}\cdot\sin(t\rho\sin\theta-s\rho\sin\alpha\theta+\theta)d\rho,
\end{equation}
for $\pi/2<\theta<\pi.$
\end{theorem}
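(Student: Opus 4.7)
The plan is to mimic exactly the strategy used in Theorem \ref{ThGen1}, simply replacing the Wright-type function $\psi_{\alpha,1-\alpha}$ by $\psi_{\alpha,0}$. The key observation is that Theorem \ref{ThGen1} already establishes the template: start from a continuous subordination formula that expresses the continuous $(\alpha,\beta)$-resolvent family in terms of the $C_0$-semigroup $\{T(t)\}_{t\geq 0}$, then discretize via integration against the Poisson kernel $\rho_n^\tau$.

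Concretely, I would proceed as follows. First, I would invoke the continuous subordination principle from \cite{Ba-01} (or \cite[Corollary 2]{Pon-20c}) in the case $\beta=\alpha$ to obtain an exponentially bounded $(\alpha,\alpha)$-resolvent family $\{S_{\alpha,\alpha}(t)\}_{t\geq 0}$ generated by $A$, given explicitly by
\begin{equation*}
S_{\alpha,\alpha}(t)x=\int_0^\infty \psi_{\alpha,0}(t,s)T(s)x\,ds,\quad x\in X,
\end{equation*}
where $\psi_{\alpha,0}$ is the Wright type function in \eqref{eqSubordination-6}. Second, I would define the candidate sequence
\begin{equation*}
S_{\alpha,\alpha}^n x:=\int_0^\infty \rho_n^\tau(t)S_{\alpha,\alpha}(t)x\,dt,\quad n\in\N_0,
\end{equation*}
so that substituting the subordination formula and applying Fubini yields the announced double-integral representation.

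It remains to verify that $\{S_{\alpha,\alpha}^n\}_{n\in\N_0}$ really is a discrete $(\alpha,\alpha)$-resolvent family in the sense of Definition \ref{DefResolvent}. The commutation property $S_{\alpha,\alpha}^n A x = A S_{\alpha,\alpha}^n x$ for $x\in D(A)$ follows from the analogous property of the continuous family by multiplying by $\rho_n^\tau(t)$, integrating over $[0,\infty)$, and using closedness of $A$. The resolvent equation \eqref{ResolventEqn} is obtained exactly as in the proof of Theorem \ref{ThRepresentation}: starting from $S_{\alpha,\alpha}(t)x=g_\alpha(t)x+A(g_\alpha\ast S_{\alpha,\alpha})(t)x$, multiplying by $\rho_n^\tau(t)$ and integrating, then applying \cite[Theorem 5.2]{Ab-Li-Mi-Ve-19} (or \cite[Theorem 2.8]{Pon-19}) which converts the continuous convolution with $g_\alpha$ into the discrete convolution with $k_\tau^\alpha$.

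The only potential obstacle is verifying the hypotheses of the continuous subordination theorem cited from \cite{Ba-01,Pon-20c} for the parameter pair $(\alpha,\alpha)$, and then the measurability/Fubini justification for interchanging the integrals against $\rho_n^\tau$ and the inner $s$-integral; this is under control because $\{T(t)\}_{t\geq 0}$ is exponentially bounded and $\rho_n^\tau$ decays exponentially, so $\omega$ can be absorbed as in Proposition \ref{Prop2.14}. The core computation is entirely parallel to Theorem \ref{ThGen1}, so no new analytical machinery is needed.
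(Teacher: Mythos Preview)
Your proposal is correct and follows essentially the same approach as the paper: invoke a continuous subordination formula for $\{S_{\alpha,\alpha}(t)\}_{t\geq 0}$ in terms of the semigroup, then discretize by integrating against $\rho_n^\tau$ and appeal to the argument of Theorem~\ref{ThGen1}. The only discrepancy is bibliographic: for the $(\alpha,\alpha)$ continuous resolvent family the paper cites \cite[Theorem 3.1]{Ke-Li-Wa-13b} or \cite[Corollary 3]{Pon-20c}, whereas the references you name (\cite{Ba-01}, \cite[Corollary 2]{Pon-20c}) are the ones used for the $(\alpha,1)$ case with kernel $\psi_{\alpha,1-\alpha}$, so you should adjust the citation to a source that actually yields $\psi_{\alpha,0}$.
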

\begin{proof}
By \cite[Theorem 3.1]{Ke-Li-Wa-13b} or \cite[Corollary 3]{Pon-20c}, $A$ generates the fractional resolvent family $\{S_{\alpha,\alpha}(t)\}_{t\geq 0}$ which is defined by
\begin{equation*}
S_{\alpha,\alpha}(t)x=\int_0^\infty\psi_{\alpha,0}(t,s)T(s)xds, \quad x\in X,
\end{equation*}
where $\psi_{\alpha,0}(t,s)$ is given in \eqref{eqSubordination-6}. Multiplying both sides in the last equation by $\rho^\tau_n(t)$ and integrating over $[0,\infty)$ the result follows as in the proof of Theorem \ref{ThGen1}.

\end{proof}

\begin{theorem}\label{ThGen3}
Let $1<\alpha<2.$ Assume that $A$ is the generator of a cosine family $\{C(t)\}_{t\in \R}.$ Then, $A$ generates the $(\alpha,1)$-resolvent sequence $\{S_{\alpha,1}^n\}_{n\in\N_0}$ given by
\begin{equation*}
S_{\alpha,1}^n x=\int_0^\infty \int_0^\infty \rho^\tau_n(t) \psi_{\frac{\alpha}{2},1-\frac{\alpha}{2}}(t,s)C(s)xdsdt, \quad x\in X,
\end{equation*}
where $\psi_{\frac{\alpha}{2},1-\frac{\alpha}{2}}$ is the Wright type function given by
\begin{eqnarray}\label{eqSubordination-8}
\hspace{-1.0cm}\notag\psi_{\frac{\alpha}{2},1-\frac{\alpha}{2}}(t,s)=\frac{1}{\pi}\int_0^\infty \hspace{-0.8cm}&&\rho^{\frac{\alpha}{2}-1}e^{-s\rho^{\frac{\alpha}{2}}\cos\frac{\alpha}{2}(\pi-\theta)-t\rho\cos\theta}\\
&&\times\sin\left(t\rho\sin\theta-s\rho^{\frac{\alpha}{2}}\sin\tfrac{\alpha}{2}(\pi-\theta)+\tfrac{\alpha}{2}(\pi-\theta)\right)d\rho,
\end{eqnarray}
\vskip -3pt \noindent
for $\theta\in(\pi-\frac{2}{\alpha},\pi/2).$
\end{theorem}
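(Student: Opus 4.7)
The plan is to mirror exactly the strategy used in the proofs of Theorems \ref{ThGen1} and \ref{ThGen2}, which in turn have the same skeleton as Theorem \ref{ThRepresentation}: pass from the continuous subordination formula to the discrete one by applying the Poisson-type transform $f\mapsto \int_0^\infty \rho_n^\tau(t)f(t)\,dt$. The only difference here is that the underlying semigroup is replaced by a cosine family, which forces the parameter in the Wright-type kernel to shift from $\alpha$ to $\alpha/2$ (because for cosine families the natural correspondence is with second-order problems).

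First, I would invoke the continuous subordination result that is the exact analogue of \cite{Ba-01}, \cite[Corollary 2]{Pon-20c} for semigroups, in the cosine-family setting: when $A$ generates a cosine family $\{C(t)\}_{t\in\mathbb{R}}$ and $1<\alpha<2$, the operator $A$ generates a continuous $(\alpha,1)$-resolvent family $\{S_{\alpha,1}(t)\}_{t\geq 0}$ with the explicit representation
\begin{equation*}
S_{\alpha,1}(t)x=\int_0^\infty \psi_{\frac{\alpha}{2},1-\frac{\alpha}{2}}(t,s)C(s)x\,ds,\quad x\in X,
\end{equation*}
where $\psi_{\alpha/2,1-\alpha/2}$ is the Wright-type function in \eqref{eqSubordination-8}. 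In particular $\{S_{\alpha,1}(t)\}_{t\geq 0}$ is strongly continuous, commutes with $A$ on $D(A)$, is exponentially bounded, and satisfies the continuous resolvent equation
\begin{equation*}
S_{\alpha,1}(t)x=x+A\int_0^t g_\alpha(t-s)S_{\alpha,1}(s)x\,ds,\qquad x\in X.
\end{equation*}

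Next, I would define $S_{\alpha,1}^n x:=\int_0^\infty \rho_n^\tau(t)S_{\alpha,1}(t)x\,dt$, substitute the subordination formula for $S_{\alpha,1}(t)x$ inside, and apply Fubini (justified by exponential boundedness of $\{C(t)\}$ together with the standard growth estimates on $\psi_{\alpha/2,1-\alpha/2}$, which are the same estimates required to make the continuous subordination formula converge). This yields the double-integral representation stated in the theorem.

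To verify that $\{S_{\alpha,1}^n\}_{n\in\N_0}$ is indeed a discrete $(\alpha,1)$-resolvent family, I would multiply the continuous resolvent equation by $\rho_n^\tau(t)$ and integrate over $[0,\infty)$, using \cite[Theorem 5.2]{Ab-Li-Mi-Ve-19} (or \cite[Theorem 2.8]{Pon-19}) exactly as in the proof of Theorem \ref{ThRepresentation} to convert the continuous convolution $g_\alpha\ast S_{\alpha,1}$ into the discrete convolution $\tau(k_\tau^\alpha\star S_{\alpha,1})^n$, and using closedness of $A$ to pull $A$ through the Bochner integral; this produces the resolvent identity \eqref{ResolventEqn} with $\beta=1$. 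The commutation $AS_{\alpha,1}^n x=S_{\alpha,1}^n Ax$ for $x\in D(A)$ follows by the same averaging of the continuous commutation relation.

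The only nontrivial step is the justification of Fubini and of the Poisson transform in the presence of the Wright kernel $\psi_{\alpha/2,1-\alpha/2}$; this is where one needs explicit decay estimates on $\psi_{\alpha/2,1-\alpha/2}(t,s)$ in both arguments, combined with $\|C(s)\|\leq Me^{\omega s}$, to ensure absolute integrability so that orders of integration may be exchanged and $A$ may be extracted from the triple integral. Once these estimates are in place (as already assumed when the continuous formula is stated), the proof is a direct transcription of the proofs of Theorems \ref{ThGen1} and \ref{ThGen2}.
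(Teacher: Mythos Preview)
Your proposal is correct and follows essentially the same route as the paper: invoke the continuous subordination formula from \cite[Corollary 4]{Pon-20c} to obtain $S_{\alpha,1}(t)x=\int_0^\infty \psi_{\alpha/2,1-\alpha/2}(t,s)C(s)x\,ds$, then apply the Poisson transform $\int_0^\infty \rho_n^\tau(t)\,(\cdot)\,dt$ exactly as in Theorems \ref{ThGen1} and \ref{ThGen2}. The paper's proof is even terser than yours (it simply cites the continuous result and says ``the result follows as in the Proof of Theorem \ref{ThGen2}''), so your additional remarks on Fubini and the verification of the discrete resolvent identity are extra detail rather than a different argument.
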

\begin{proof}
By \cite[Corollary 4]{Pon-20c}, $A$ generates the fractional resolvent family $\{S_{\alpha,1}(t)\}_{t\geq 0}$ given by \begin{equation*}
S_{\alpha,1}(t) x=\int_0^\infty \psi_{\frac{\alpha}{2},1-\frac{\alpha}{2}}(t,s)C(s)xds, \quad x\in X,
\end{equation*}
where $\psi_{\frac{\alpha}{2},1-\frac{\alpha}{2}}(t,s)$ is defined in \eqref{eqSubordination-8}. The result follows as in the Proof of Theorem \ref{ThGen2}.
\end{proof}

\begin{theorem}
Let $1<\alpha<2.$ Assume that $A$ is the generator of a cosine family $\{C(t)\}_{t\in \R}.$ Then, $A$ generates the $(\alpha,\alpha)$-resolvent sequence $\{S_{\alpha,\alpha}^n\}_{n\in\N_0}$ given by
\begin{equation*}
S_{\alpha,\alpha}^n x=\int_0^\infty \int_0^\infty \rho^\tau_n(t) \psi_{\frac{\alpha}{2},\frac{\alpha}{2}}(t,s)C(s)xdsdt, \quad x\in X,
\end{equation*}
where $\psi_{\frac{\alpha}{2},\frac{\alpha}{2}}$ is the Wright type function given by
\vskip -9pt
\begin{equation}\label{eqSubordination-10}
\psi_{\frac{\alpha}{2},\frac{\alpha}{2}}(t,s)=(g_{\frac{\alpha}{2}}\ast \psi_{\frac{\alpha}{2},0}(\cdot,s))(t),
\end{equation}
\vskip -2pt \noindent
where $\psi_{\frac{\alpha}{2},0}(\cdot,s)$ is given in \eqref{eqSubordination-6}.
\end{theorem}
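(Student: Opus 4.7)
The plan is to follow the recipe of Theorems \ref{ThGen1}--\ref{ThGen3} almost verbatim: bootstrap from a continuous subordination formula for $S_{\alpha,\alpha}(t)$ in terms of the cosine family, and then average against the Poisson kernel $\rho_n^\tau$. First, I would invoke the continuous analogue (available through the same references \cite{Ke-Li-Wa-13b,Pon-20c} used in the preceding proofs), which asserts that the generator $A$ of the cosine family $\{C(t)\}_{t\in\mathbb{R}}$ also generates an exponentially bounded continuous $(\alpha,\alpha)$-resolvent family $\{S_{\alpha,\alpha}(t)\}_{t\geq 0}$ admitting the subordination representation
\[
S_{\alpha,\alpha}(t)x = \int_0^\infty \psi_{\frac{\alpha}{2},\frac{\alpha}{2}}(t,s)\,C(s)x\,ds, \qquad x\in X,
\]
with $\psi_{\frac{\alpha}{2},\frac{\alpha}{2}}$ as in \eqref{eqSubordination-10}, together with the resolvent equation
\[
S_{\alpha,\alpha}(t)x = g_\alpha(t)x + A\int_0^t g_\alpha(t-s)S_{\alpha,\alpha}(s)x\,ds,
\]
and the commutativity $S_{\alpha,\alpha}(t)Ax = AS_{\alpha,\alpha}(t)x$ for all $x\in D(A)$ and $t\geq 0$.

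Second, I would set $S_{\alpha,\alpha}^n x := \int_0^\infty \rho_n^\tau(t)\,S_{\alpha,\alpha}(t)\,x\,dt$ for $n\in \mathbb{N}_0$ and $x\in X$, then apply Fubini (justified by exponential boundedness of $\{C(t)\}_{t\in\mathbb{R}}$ combined with the integrability estimates on $\psi_{\frac{\alpha}{2},\frac{\alpha}{2}}$) to obtain the stated double-integral representation. Third, to check that this sequence satisfies Definition \ref{DefResolvent} with $\beta=\alpha$, I would multiply the continuous resolvent equation by $\rho_n^\tau(t)$ and integrate over $[0,\infty)$. The free term produces $k_\tau^\alpha(n)x$ by \eqref{eq2.2}, while the convolution term becomes $\tau\sum_{j=0}^n k_\tau^\alpha(n-j)S_{\alpha,\alpha}^j x$ thanks to the continuous-to-discrete convolution identity of \cite[Theorem 5.2]{Ab-Li-Mi-Ve-19} (equivalently \cite[Theorem 2.8]{Pon-19}); closedness of $A$ pulls it back outside the sum, giving exactly \eqref{ResolventEqn}. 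The commutativity clause of Definition \ref{DefResolvent} follows by averaging the continuous commutativity against $\rho_n^\tau$ and invoking closedness of $A$ once more.

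I do not expect a genuine obstacle here, since the argument is a mechanical adaptation of the three preceding theorems; this is the only case among them in which the continuous subordination kernel is not directly a Wright function but a temporal convolution of $g_{\frac{\alpha}{2}}$ against $\psi_{\frac{\alpha}{2},0}(\cdot,s)$. The mildly delicate point is bookkeeping of the Fubini switch across the nested integrations in the variables $s$, $t$, and the convolution variable defining $\psi_{\frac{\alpha}{2},\frac{\alpha}{2}}$; this is handled by the exponential bound on $C(s)$ and the fact that $g_{\frac{\alpha}{2}}\ast\psi_{\frac{\alpha}{2},0}(\cdot,s)$ inherits enough decay in $t$ to make the Poisson average absolutely convergent, exactly as in the proofs of Theorems \ref{ThGen1}--\ref{ThGen3}.
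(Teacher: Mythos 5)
Your proposal is correct and follows essentially the same route as the paper: invoke the continuous subordination formula for $S_{\alpha,\alpha}(t)$ in terms of the cosine family (the paper cites \cite[Corollary 5]{Pon-20c}), define $S_{\alpha,\alpha}^n$ by averaging against $\rho_n^\tau$, and transfer the continuous resolvent equation to the discrete one via \cite[Theorem 5.2]{Ab-Li-Mi-Ve-19} or \cite[Theorem 2.8]{Pon-19}. You simply spell out the verification of Definition \ref{DefResolvent} more explicitly than the paper, which compresses it to ``the rest of the proof follows as in Theorem \ref{ThGen1}.''
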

\begin{proof}
By \cite[Corollary 5]{Pon-20c}, $A$ generates the fractional resolvent family $\{S_{\alpha,1}(t)\}_{t\geq 0}$ given by \begin{equation*}
S_{\alpha,\alpha}(t)x=\int_0^\infty \psi_{\frac{\alpha}{2},\frac{\alpha}{2}}(t,s)C(s)xds,\quad x\in X,
\end{equation*}
where $\psi_{\frac{\alpha}{2},\frac{\alpha}{2}}(t,s)$ is defined in \eqref{eqSubordination-10}. The rest of the proof follows as in Theorem \ref{ThGen1}.
\end{proof}

\begin{proposition}\label{PropUniqueness}
If $\{S_{\alpha,\beta}^n\}_{n\in\N_0}$ and $\{T_{\alpha,\beta}^n\}_{n\in\N_0}$ are $(\alpha,\beta)$-resolvent sequences generated by $A,$ then $S_{\alpha,\beta}^n=T_{\alpha,\beta}^n$ for all $n\in\N_0.$
\end{proposition}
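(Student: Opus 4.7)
The plan is strong induction on $n\in\N_0$, exploiting the observation that the defining identity \eqref{ResolventEqn} is itself a recursion once one pulls out the $j=n$ term of the convolution, and that the $n=0$ value is forced by the resolvent equation via Proposition \ref{Prop2.1}.

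For the base case $n=0$, nothing is to be shown: Proposition \ref{Prop2.1}(2) gives $S_{\alpha,\beta}^0 = T_{\alpha,\beta}^0 = \tau^{\beta-1-\alpha}(\tau^{-\alpha}-A)^{-1}$ directly from \eqref{ResolventEqn}. For the inductive step, fix $n\geq 1$ and suppose $S_{\alpha,\beta}^j x = T_{\alpha,\beta}^j x$ for all $x\in X$ and $0\leq j\leq n-1$. Isolating the $j=n$ summand in \eqref{ResolventEqn} and using $\tau k^\alpha_\tau(0)=\tau^\alpha$, the resolvent equation rearranges to
\begin{equation*}
(I - \tau^\alpha A)\, S_{\alpha,\beta}^n x \;=\; k^\beta_\tau(n)\, x \;+\; \tau A \sum_{j=0}^{n-1} k^\alpha_\tau(n-j)\, S_{\alpha,\beta}^j x.
\end{equation*}
By Proposition \ref{Prop2.1}(1), $\tau^{-\alpha}\in\rho(A)$, so $I-\tau^\alpha A = \tau^\alpha(\tau^{-\alpha}-A)$ is boundedly invertible on $X$; hence $S_{\alpha,\beta}^n x$ is uniquely determined by $x$ and by $S_{\alpha,\beta}^0 x,\ldots,S_{\alpha,\beta}^{n-1} x$. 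The identical identity holds with $T_{\alpha,\beta}$ in place of $S_{\alpha,\beta}$, and the inductive hypothesis forces the right-hand sides to coincide, whence $S_{\alpha,\beta}^n = T_{\alpha,\beta}^n$.

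I do not expect a real obstacle here; the only point requiring a line of care is justifying that the truncated sum $\sum_{j=0}^{n-1} k^\alpha_\tau(n-j) S_{\alpha,\beta}^j x$ lies in $D(A)$ so that $A$ may legitimately be applied to it. This is immediate from Definition \ref{DefResolvent}(1), which ensures $S_{\alpha,\beta}^j x\in D(A)$ for every $j$. A more compact alternative would be to appeal to Proposition \ref{Z-Transform}, which gives both sequences the same explicit $Z$-transform, and then invoke the injectivity of the $Z$-transform recalled in Section \ref{Sect2}. However, that route tacitly assumes the two $Z$-transforms converge on a common annulus $|z|>R$, something the abstract definition of a discrete $(\alpha,\beta)$-resolvent family does not explicitly guarantee; the induction above is self-contained and avoids this hidden hypothesis.
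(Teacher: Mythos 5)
Your argument is correct and is essentially the paper's own proof: the authors also reduce to Proposition \ref{Prop2.1} for $n=0$, peel off the $j=n$ term of \eqref{ResolventEqn} to get $(I-\tau^\alpha A)$ acting on the $n$-th term in terms of earlier ones, and invert using $\tau^{-\alpha}\in\rho(A)$ (they phrase it as an induction on the difference $h(n)=S_{\alpha,\beta}^nx-T_{\alpha,\beta}^nx$, which is only cosmetically different from your comparing the two recursions directly). Your remarks on $D(A)$-membership and on the hidden convergence hypothesis in the $Z$-transform alternative are sound but not needed beyond what you already wrote.
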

\begin{proof}
For $x\in X,$ we define $h(n):=S_{\alpha,\beta}^nx-T_{\alpha,\beta}^nx.$ By Proposition \ref{Prop2.1}, we obtain
\begin{equation*}
S_{\alpha,\beta}^0x=T_{\alpha,\beta}^0x=k_\tau^\beta(0)\tau^{-\alpha}(\tau^{-\alpha}-A)^{-1},
\end{equation*}
which implies that $h(0)=0.$ On the other hand, by Definition \ref{DefResolvent}
\begin{equation*}
h(n)=\tau A\sum_{j=0}^n k_\tau^\alpha(n-j)h(j)
\end{equation*}
and thus
\begin{equation*}
(I-\tau^\alpha A)h(n)=\tau A\sum_{j=0}^{n-1}k_\tau^\alpha(n-j)h(j).
\end{equation*}
By Proposition \ref{Prop2.1}, $\tau^{-\alpha}\in\rho(A),$ and therefore $(I-\tau^\alpha A)=\tau^{\alpha}(\tau^{-\alpha}-A)$ is an invertible operator.
Hence,
\begin{equation*}
h(n)=0
\end{equation*}
for all $n\in \N.$ This implies that $S_{\alpha,\beta}^nx=T_{\alpha,\beta}^nx$ for all $n\in \N_0$ and $x\in X.$
\end{proof}

Now, we define the following sequence $(a_{n,l})$ as:

\begin{equation*}
a_{0,1}:=k_\tau^\beta(0),\quad a_{1,1}:=(k_\tau^\beta(1)k_\tau^\alpha(1)-k_\tau^\beta(0)k_\tau^\alpha(1))k_\tau^{\alpha}(0)^{-1},\quad a_{1,2}:=k_\tau^\beta(0)k_\tau^\alpha(1)k_\tau^{\alpha}(0)^{-1}
\end{equation*}
and for $n\geq 2,$ we define $(a_{n,l})$ as follow:
\begin{equation*}\label{DefSeqA3}
a_{n,n+1}:=k_\tau^\alpha(1)a_{n-1,n}k_\tau^{\alpha}(0)^{-1}.
\end{equation*}

        \begin{equation}\label{DefSeqA2}
            a_{n,1}:=\left(k_\tau^\beta(n)k_\tau^\alpha(0)-\sum_{j=0}^{n-1}k_\tau^\alpha(n-j)a_{j,1}\right)k_\tau^{\alpha}(0)^{-1},
        \end{equation}

\begin{equation}\label{DefSeqA1}
            a_{n,l}:=\left(\sum_{j=l-2}^{n-1}k_\tau^\alpha(n-j)a_{j,l-1}-\sum_{j=l-1}^{n-1}k_\tau^\alpha(n-j)a_{j,l}\right)k_\tau^{\alpha}(0)^{-1}, \quad \mbox{ for } 2\leq l\leq n,
\end{equation}

Moreover, we denote the resolvent operator $R_\tau:X\to D(A)$ as
\begin{equation*}
R_\tau:=\tau^{-\alpha}\left(\tau^{-\alpha}-A\right)^{-1}.
\end{equation*}

The next Theorem is one of the main result in this paper and gives an explicit representation of the discrete resolvent family $S_{\alpha,\beta}^n$ for all $n\in \N_0.$

\begin{theorem}\label{ThRepresentation}
Let $\{S_{\alpha,\beta}^n\}_{n\in\N_0}\subset B(X)$ be a discrete $(\alpha,\beta)$-resolvent family generated by $A.$ Then, for each $x\in X,$
\begin{equation}\label{EqResolv1}
S_{\alpha,\beta}^0x=a_{0,1}R_\tau x,\quad \mbox{ and } \quad S_{\alpha,\beta}^1=a_{1,1}R_\tau x+a_{1,2}R_\tau^2 x,
\end{equation}
and for $n\geq 2$
\begin{equation}\label{EqResolv2}
  S_{\alpha,\beta}^nx=\sum_{j=1}^{n+1}a_{n,j}R_\tau^j x.
\end{equation}
\end{theorem}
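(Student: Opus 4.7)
The plan is to proceed by induction on $n$, using only the resolvent equation \eqref{ResolventEqn} and the invertibility of $\tau^{-\alpha}-A$ provided by Proposition \ref{Prop2.1}. The base case $n=0$ is already handled by Proposition \ref{Prop2.1}(2), which gives $S_{\alpha,\beta}^0 = k_\tau^\beta(0) R_\tau = a_{0,1} R_\tau$. For $n=1$, I would isolate $S_{\alpha,\beta}^1$ from \eqref{ResolventEqn} by moving the $j=1$ summand to the left, obtaining $(I - \tau^\alpha A) S_{\alpha,\beta}^1 = k_\tau^\beta(1) I + \tau k_\tau^\alpha(1) A S_{\alpha,\beta}^0$, and noting that $I - \tau^\alpha A = \tau^\alpha(\tau^{-\alpha}-A)$ is invertible with inverse $R_\tau$. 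Applying $R_\tau$ and substituting $S_{\alpha,\beta}^0 = a_{0,1} R_\tau$ produces the claimed formula $S_{\alpha,\beta}^1 = a_{1,1} R_\tau + a_{1,2} R_\tau^2$ after simplification.

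The crucial algebraic identity driving the whole argument is
\[
\tau A R_\tau^{l} = \tau^{1-\alpha}\bigl(R_\tau^l - R_\tau^{l-1}\bigr), \qquad l \geq 1,
\]
which follows from $A(\tau^{-\alpha}-A)^{-1} = \tau^{-\alpha}(\tau^{-\alpha}-A)^{-1} - I = R_\tau - I$ and from the fact that $A$ commutes with its resolvent. Note also that $\tau^{1-\alpha} = k_\tau^\alpha(0)^{-1}$ by \eqref{eq2.2}; this is precisely what produces the $k_\tau^\alpha(0)^{-1}$ factor appearing in every recursion defining the $a_{n,l}$.

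For the inductive step ($n\geq 2$), I assume that \eqref{EqResolv2} holds for all indices less than $n$. Isolating $S_{\alpha,\beta}^n$ as in the $n=1$ case, applying $R_\tau$, and using the identity above inside the inductive expression for each $S_{\alpha,\beta}^j$ yields
\[
S_{\alpha,\beta}^n x = k_\tau^\beta(n) R_\tau x + \tau^{1-\alpha} \sum_{j=0}^{n-1} k_\tau^\alpha(n-j) \sum_{l=1}^{j+1} a_{j,l}\bigl(R_\tau^{l+1} - R_\tau^l\bigr) x.
\]
I then collect the coefficient of each power $R_\tau^m$ for $1 \leq m \leq n+1$. For $m=1$, only the $-R_\tau^l$ term with $l=1$ contributes from the double sum, and combined with the leading $k_\tau^\beta(n) R_\tau$ term this reproduces exactly \eqref{DefSeqA2}. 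For $2\leq m\leq n$, both $+R_\tau^{l+1}$ (with $l=m-1$) and $-R_\tau^l$ (with $l=m$) contribute, and their combination matches \eqref{DefSeqA1}. For $m=n+1$, only the single summand $j=n-1$, $l=n$ survives, reproducing the defining relation $a_{n,n+1} = k_\tau^\alpha(1) a_{n-1,n} k_\tau^\alpha(0)^{-1}$.

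The main difficulty is purely combinatorial: for each fixed $m$, one must track precisely the ranges of $j$ (and the corresponding $l$) for which the inductive hypothesis feeds into the coefficient of $R_\tau^m$, and verify that the three a priori different recursions defining $a_{n,l}$ for $l=1$, $2\leq l\leq n$, and $l=n+1$ correspond exactly to the three natural cases that arise in the expansion. No analytic estimate is required beyond the algebraic identity for $\tau A R_\tau^l$; the rest is careful bookkeeping.
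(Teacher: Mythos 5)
Your argument is correct, but it takes a genuinely different route from the paper's. The paper derives the recursion by specializing the functional equation of Theorem \ref{ThFunctionalEquation} to the pair $(m,n)=(n+1,0)$, substituting $S_{\alpha,\beta}^0=k_\tau^\beta(0)R_\tau$ and cancelling $k_\tau^\beta(0)k_\tau^\alpha(0)$; you instead work directly with the defining resolvent equation \eqref{ResolventEqn}, peel off the $j=n$ summand, invert $I-\tau^\alpha A=\tau^\alpha(\tau^{-\alpha}-A)$, and drive everything with the single identity $\tau AR_\tau^{l}=k_\tau^\alpha(0)^{-1}(R_\tau^{l}-R_\tau^{l-1})$. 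The two are algebraically equivalent: in the functional equation with second index $0$ the products $S_{\alpha,\beta}^jS_{\alpha,\beta}^0$ reduce to $k_\tau^\beta(0)S_{\alpha,\beta}^jR_\tau$, and the resulting factor $I-R_\tau=-\tau^\alpha AR_\tau$ is exactly your identity in disguise. What your route buys is economy and transparency: it uses only Definition \ref{DefResolvent} and Proposition \ref{Prop2.1} (Theorem \ref{ThFunctionalEquation} is not needed), and it exhibits the three recursions \eqref{DefSeqA2}, \eqref{DefSeqA1} and $a_{n,n+1}=k_\tau^\alpha(1)a_{n-1,n}k_\tau^\alpha(0)^{-1}$ as the three boundary cases of one coefficient extraction, which your range bookkeeping ($j\geq m-2$ for the $+R_\tau^{l+1}$ terms, $j\geq m-1$ for the $-R_\tau^{l}$ terms) gets right. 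Two minor points to make explicit in a write-up: $I-\tau^\alpha A$ is only defined on $D(A)$, so invoke Definition \ref{DefResolvent}(1) to ensure $S_{\alpha,\beta}^nx\in D(A)$ before applying $R_\tau$ to recover $S_{\alpha,\beta}^nx$; and your $n=1$ case yields $a_{1,1}=(k_\tau^\beta(1)k_\tau^\alpha(0)-k_\tau^\beta(0)k_\tau^\alpha(1))k_\tau^\alpha(0)^{-1}$, which agrees with \eqref{DefSeqA2} at $n=1$ and with the paper's own proof, though the displayed definition of $a_{1,1}$ in the text carries an evident typo ($k_\tau^\alpha(1)$ in place of $k_\tau^\alpha(0)$ in the first product).
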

\begin{proof}
The first identity in \eqref{EqResolv1} follows from \eqref{Prop2.1}. In order to prove the second one, we take $m=1, n=0$ in  \eqref{FunctionalEqn} and we get
\begin{eqnarray*}
S_{\alpha,\beta}^1 k_\tau^\alpha(0)S_{\alpha,\beta}^0 x-\left(\sum_{j=0}^1 k_\tau^\alpha(1-j)S_{\alpha,\beta}^j\right)S_{\alpha,\beta}^0 x=k_\tau^\beta(1)k_\tau^\alpha(0)S_{\alpha,\beta}^0x-k_\tau^\beta(0)\left(\sum_{j=0}^1 k_\tau^\alpha(1-j)S_{\alpha,\beta}^j x\right),
\end{eqnarray*}
which is equivalent to
\begin{equation*}
-k_\tau^\alpha(1)S_{\alpha,\beta}^0S_{\alpha,\beta}^0x=k_\tau^\beta(1)k_\tau^\alpha(0)S_{\alpha,\beta}^0x-k_\tau^\beta(0)k_\tau^\alpha(1)S_{\alpha,\beta}^0x-k_\tau^\beta(0)k_\tau^\alpha(0)S_{\alpha,\beta}^1x. \end{equation*}
As $S_{\alpha,\beta}^0x=k_\tau^\beta(0)R_\tau x$ this last identity implies that
\begin{eqnarray*}
k_\tau^\beta(0)k_\tau^\alpha(0)S_{\alpha,\beta}^1x=k_\tau^\beta(1)k_\tau^\alpha(0)k_\tau^\beta(0)R_\tau x-k_\tau^\beta(0)k_\tau^\alpha(1)k_\tau^\beta(0)R_\tau x+k_\tau^\beta(0)k_\tau^\alpha(1)k_\tau^\beta(0)R_\tau^2x.
\end{eqnarray*}
Since $k_\tau^\alpha(0)=\tau^{\alpha-1}\neq 0,$ we conclude that
\begin{equation*}
S_{\alpha,\beta}^1x=(k_\tau^\beta(1)k_\tau^\alpha(0)-k_\tau^\beta(0)k_\tau^\alpha(1))k_\tau^{\alpha}(0)^{-1}R_\tau+k_\tau^\beta(0)k_\tau^\alpha(1)k_\tau^{\alpha}(0)^{-1} R_\tau^2x=a_{1,1}R_\tau x+a_{1,2}R_\tau^2x.
\end{equation*}
In order to prove \eqref{EqResolv2} we proceed by induction on $n\geq 2.$ For $n=2,$ we take $m=2$ and $n=0$ in  \eqref{FunctionalEqn} to obtain
\begin{eqnarray*}
S_{\alpha,\beta}^2 k_\tau^\alpha(0)S_{\alpha,\beta}^0 x-\left(\sum_{j=0}^2 k_\tau^\alpha(2-j)S_{\alpha,\beta}^j\right)S_{\alpha,\beta}^0 x=k_\tau^\beta(2)k_\tau^\alpha(0)S_{\alpha,\beta}^0x-k_\tau^\beta(0)\left(\sum_{j=0}^2 k_\tau^\alpha(2-j)S_{\alpha,\beta}^j x\right),
\end{eqnarray*}
which can be written as
\begin{eqnarray*}
-k_\tau^\alpha(2)S_{\alpha,\beta}^0S_{\alpha,\beta}^0x-k_\tau^\alpha(1)S_{\alpha,\beta}^1S_{\alpha,\beta}^0x&=&k_\tau^\beta(2)k_\tau^\alpha(0)S_{\alpha,\beta}^0x-k_\tau^\beta(0)k_\tau^\alpha(2)S_{\alpha,\beta}^0x-k_\tau^\beta(0)k_\tau^\alpha(1)S_{\alpha,\beta}^1x\\
&&-k_\tau^\beta(0)k_\tau^\alpha(0)S_{\alpha,\beta}^2x.
\end{eqnarray*}
Since $S_{\alpha,\beta}^0x=k_\tau^\beta(0)R_\tau x$ and $S_{\alpha,\beta}^1=a_{1,1}R_\tau x+a_{1,2}R_\tau^2 x$ we have
\begin{eqnarray*}
k_\tau^\beta(0)k_\tau^\alpha(0)S_{\alpha,\beta}^2x&=&k_\tau^\beta(2)k_\tau^\alpha(0)k_\tau^\beta(0)R_\tau x-k_\tau^\beta(0)k_\tau^\alpha(2)k_\tau^\beta(0)R_\tau x-k_\tau^\beta(0)k_\tau^\alpha(1)a_{1,1}R_\tau x\\
&&-k_\tau^\beta(0)k_\tau^\alpha(1)a_{1,2}R_\tau^2x+k_\tau^\alpha(2)k_\tau^\beta(0)k_\tau^\beta(0)R_\tau^2x\\
&&+k_\tau^\alpha(1)k_\tau^\beta(0)a_{1,1}R_\tau^2x+k_\tau^\alpha(1)k_\tau^\beta(0)a_{1,2}R_\tau^3x.
\end{eqnarray*}
Hence,
\begin{eqnarray}\label{eq2.3}
\nonumber k_\tau^\alpha(0)S_{\alpha,\beta}^2x&=&(k_\tau^\beta(2)k_\tau^\alpha(0)-k_\tau^\beta(0)k_\tau^\alpha(2)-k_\tau^\alpha(1)a_{1,1})R_\tau x\\
&&+(k_\tau^\alpha(2)k_\tau^\beta(0)+k_\tau^\alpha(1)a_{1,1}-k_\tau^\alpha(1)a_{1,2})R_\tau^2 x+k_\tau^\alpha(1)a_{1,2}R_\tau^3x.
\end{eqnarray}
On the other hand, if we expand the sum (for $n=2$) in \eqref{EqResolv2} and we obtain
\begin{eqnarray*}
\sum_{j=1}^{3}a_{n,j}R_\tau^j x=a_{2,1}R_\tau x+a_{2,2}R_\tau^2x+a_{2,3}R_\tau^3x,
\end{eqnarray*}
and by definition of the sequence $(a_{n,l})$ we get
\begin{eqnarray*}
a_{2,1}&=&(k_\tau^\beta(2)k_\tau^\alpha(0)-k_\tau^\alpha(2)a_{0,1}-k_\tau^\alpha(1)a_{1,1})k_\tau^{\alpha}(0)^{-1}\\
a_{2,2}&=&(k_\tau^\alpha(2)a_{0,1}+k_\tau^\alpha(1)a_{1,1}-k_\tau^\alpha(1)a_{1,2})k_\tau^{\alpha}(0)^{-1}\\
a_{2,3}&=&k_\tau^\alpha(1)a_{1,2}k_\tau^{\alpha}(0)^{-1}.
\end{eqnarray*}
From \eqref{eq2.3} we conclude that
\begin{eqnarray*}
S_{\alpha,\beta}^2x=\sum_{j=1}^{3}a_{n,j}R_\tau^j x.
\end{eqnarray*}
Now, we assume that \eqref{EqResolv2} holds for all $l\leq n.$ In order to prove the identity for $n+1,$ we first take $m=n+1$ and $n=0$ in \eqref{FunctionalEqn} to obtain
\begin{eqnarray*}
S_{\alpha,\beta}^{n+1} k_\tau^\alpha(0)S_{\alpha,\beta}^0 x-\left(\sum_{j=0}^{n+1} k_\tau^\alpha(n+1-j)S_{\alpha,\beta}^j\right)S_{\alpha,\beta}^0 x&=&k_\tau^\beta(n+1)k_\tau^\alpha(0)S_{\alpha,\beta}^0x\\
&&-k_\tau^\beta(0)\left(\sum_{j=0}^{n+1} k_\tau^\alpha(n+1-j)S_{\alpha,\beta}^j x\right).
\end{eqnarray*}
Hence,
\begin{eqnarray*}
S_{\alpha,\beta}^{n+1} k_\tau^\alpha(0)S_{\alpha,\beta}^0 x-k_\tau^\alpha(n+1)S_{\alpha,\beta}^0S_{\alpha,\beta}^0x-k_\tau^\alpha(n)S_{\alpha,\beta}^1S_{\alpha,\beta}^0x-...-k_\tau^\alpha(0)S_{\alpha,\beta}^{n+1}S_{\alpha,\beta}^0x&=&\\
&&\hspace{-11.5cm}k_\tau^\beta(n+1)k_\tau^\alpha(0)S_{\alpha,\beta}^0x-k_\tau^\beta(0)k_\tau^\alpha(n+1)S_{\alpha,\beta}^0x-k_\tau^\beta(0)k_\tau^\alpha(n)S_{\alpha,\beta}^1x-...-k_\tau^\beta(0)k_\tau^\alpha(0)S_{\alpha,\beta}^{n+1}x.
\end{eqnarray*}
That is,
\begin{eqnarray*}
k_\tau^\beta(0)k_\tau^\alpha(0)S_{\alpha,\beta}^{n+1}x&=&k_\tau^\beta(n+1)k_\tau^\alpha(0)S_{\alpha,\beta}^0x-k_\tau^\beta(0)k_\tau^\alpha(n+1)S_{\alpha,\beta}^0x-...-k_\tau^\beta(0)k_\tau^\alpha(1)S_{\alpha,\beta}^nx\\
&&+k_\tau^\alpha(n+1)S_{\alpha,\beta}^0S_{\alpha,\beta}^0x+k_\tau^\alpha(n)S_{\alpha,\beta}^1S_{\alpha,\beta}^0x+...+k_\tau^\alpha(1)S_{\alpha,\beta}^{n}S_{\alpha,\beta}^0x.
\end{eqnarray*}
Since $S_{\alpha,\beta}^0x=k_\tau^\beta(0)R_\tau x$ we can write this last identity as
\begin{eqnarray*}
k_\tau^\alpha(0)S_{\alpha,\beta}^{n+1}x&=&k_\tau^\beta(n+1)k_\tau^\alpha(0)R_\tau x-k_\tau^\alpha(n+1)S_{\alpha,\beta}^0x-k_\tau^\alpha(n)S_{\alpha,\beta}^1x-...-k_\tau^\alpha(1)S_{\alpha,\beta}^nx\\
&&+k_\tau^\alpha(n+1)R_\tau S_{\alpha,\beta}^0x+k_\tau^\alpha(n)S_{\alpha,\beta}^1R_\tau x+...+k_\tau^\alpha(1)S_{\alpha,\beta}^{n}R_\tau x.
\end{eqnarray*}
By induction hypothesis we have
\begin{eqnarray*}
k_\tau^\alpha(0)S_{\alpha,\beta}^{n+1}x&=&k_\tau^\beta(n+1)k_\tau^\alpha(0)R_\tau x-k_\tau^\alpha(n+1)k_\tau^\beta(0)R_\tau x-k_\tau^\alpha(n)a_{1,1}R_\tau x-k_\tau^\alpha(n)a_{1,2}R_\tau^2 x-...\\
&&...-k_\tau^\alpha(1)[a_{n,1}R_\tau+...+a_{n,n}R_{\tau}^n+a_{n,n+1}R_\tau^{n+1}]x\\
&&...+k_\tau^\alpha(n+1)k_\tau^\beta(0)R_\tau^2x+k_\tau^\alpha(n)R_\tau [a_{1,1}R_\tau+a_{1,2}R_\tau^2]x+...\\
&&+k_\tau^\alpha(1)R_\tau [a_{n,1}R_\tau+...+a_{n,n}R_{\tau}^n+a_{n,n+1}R_\tau^{n+1}]x\\
&=&\left(k_\tau^\beta(n+1)k_\tau^\alpha(0)-k_\tau^\alpha(n+1)k_\tau^\beta(0)-k_\tau^\alpha(n)a_{1,1}-...-k_\tau^\alpha(1)a_{n,1}\right)R_\tau x\\
&&+\left(k_\tau^\alpha(n+1)k_\tau^\beta(0)+k_\tau^\alpha(n)a_{1,1}+...+k_\tau^\alpha(1)a_{n,1}-k_\tau^\alpha(n)a_{1,2}-...-k_\tau^\alpha(1)a_{n,2}\right)R_\tau^2x\\
&&\vdots\\
&&+\left(k_\tau^\alpha(2)a_{n-1,n}+k_\tau^\alpha(1)a_{n,n}-k_\tau^\alpha(1)a_{n,n+1}\right)R_{\tau}^{n+1}x\\
&&+k_\tau^\alpha(1)a_{n,n+1}R_\tau^{n+2}x,
\end{eqnarray*}
and therefore
\begin{equation*}
S_{\alpha,\beta}^{n+1}x=a_{n+1,1}R_\tau x+a_{n+1,2}R_\tau^2x+...+a_{n+1,n+2}R_\tau^{n+2}x.
\end{equation*}
This finishes the proof.

\end{proof}

If $A$ is a bounded operator, we have the following result.

\begin{proposition}\label{Th-Representation}
Let $\alpha,\beta>0$ such that $\tau^\alpha<1.$ If $A$ is a bounded operator with $\|A\|<1,$ then $A$ generates the $(\alpha,\beta)$-resolvent sequence $\{S_{\alpha,\beta}^n\}_{n\in\N_0}$ defined by
\begin{equation}\label{Eq-Representation}
S_{\alpha,\beta}^n=\sum_{j=0}^\infty k_\tau^{\alpha j+\beta}(n)A^j.
\end{equation}
\end{proposition}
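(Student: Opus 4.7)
The plan is to verify directly that the sequence defined by \eqref{Eq-Representation} satisfies the two requirements of Definition \ref{DefResolvent}, once the series is shown to converge in $\mathcal{B}(X)$. Since $A$ is bounded, commutativity with $A$ is automatic (each partial sum is a polynomial in $A$, and norm convergence preserves $AS_{\alpha,\beta}^n = S_{\alpha,\beta}^n A$), so the real content is (a) absolute convergence of the series and (b) the resolvent identity \eqref{ResolventEqn}.

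First I would establish convergence. From \eqref{eq2.2} we have $k_\tau^{\alpha j+\beta}(n) = \tau^{\alpha j+\beta-1}\Gamma(\alpha j+\beta+n)/(\Gamma(\alpha j+\beta)\Gamma(n+1))$; for fixed $n$, the ratio $\Gamma(\alpha j+\beta+n)/\Gamma(\alpha j+\beta)$ is asymptotically $(\alpha j)^n$ as $j\to\infty$, so $k_\tau^{\alpha j+\beta}(n)\|A\|^j$ behaves like $\tau^{\alpha j}\|A\|^j$ times a polynomial in $j$. Since $\tau^\alpha<1$ and $\|A\|<1$ give $\tau^\alpha\|A\|<1$, the series converges absolutely in operator norm, so $S_{\alpha,\beta}^n\in\mathcal{B}(X)$ for every $n$, and the same bound gives uniform (in $j$) control of $\sum_{j=0}^n k_\tau^\alpha(n-j) S_{\alpha,\beta}^j$ needed to interchange sums below.

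Next I would verify \eqref{ResolventEqn}. Substituting the candidate into the right-hand side and interchanging the finite sum over $j$ with the convergent series over $\ell$,
\begin{align*}
k_\tau^\beta(n)x+\tau A\sum_{j=0}^n k_\tau^\alpha(n-j)S_{\alpha,\beta}^jx
&= k_\tau^\beta(n)x + \sum_{\ell=0}^\infty A^{\ell+1}\Bigl(\tau\sum_{j=0}^n k_\tau^\alpha(n-j)k_\tau^{\alpha\ell+\beta}(j)\Bigr)x.
\end{align*}
The inner bracket equals $k_\tau^{\alpha(\ell+1)+\beta}(n)$ by the convolution formula \eqref{Semigroup-for-K} (applied with exponents $\alpha$ and $\alpha\ell+\beta$). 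Shifting the index $m=\ell+1$ and reinstating the $m=0$ term $k_\tau^\beta(n)x$, the right-hand side becomes $\sum_{m=0}^\infty k_\tau^{\alpha m+\beta}(n)A^m x = S_{\alpha,\beta}^n x$, which is exactly \eqref{ResolventEqn}. Combined with the trivial commutativity, this shows that $A$ generates $\{S_{\alpha,\beta}^n\}_{n\in\N_0}$.

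The main obstacle is the convergence/absolute summability estimate, because one must control the Pochhammer-type factor $\Gamma(\alpha j+\beta+n)/\Gamma(\alpha j+\beta)$ carefully enough to see that the geometric decay from $\tau^\alpha\|A\|<1$ dominates; once this is in hand the Fubini-style interchange of sums and the telescoping via \eqref{Semigroup-for-K} are routine.
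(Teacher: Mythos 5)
Your proposal is correct and follows essentially the same route as the paper: both verify the resolvent equation \eqref{ResolventEqn} by substituting the series, interchanging the finite and infinite sums, and collapsing the inner convolution via \eqref{Semigroup-for-K} to get $k_\tau^{\alpha(\ell+1)+\beta}(n)$, then reindexing. The only difference is cosmetic: where you derive convergence from the asymptotics of $\Gamma(\alpha j+\beta+n)/\Gamma(\alpha j+\beta)$, the paper simply cites a formula from Gradshteyn--Ryzhik for the same estimate.
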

\begin{proof}
Let $x\in X$ and $n\in \N_0.$ From \cite[Formula 8.328]{Gr-Ry-00} the serie in \eqref{Eq-Representation} converges for $\tau^\alpha<1$ and $\|A\|<1.$ Then, by \eqref{Semigroup-for-K} we get
\begin{eqnarray*}
\tau A\sum_{j=0}^n k_\tau^\alpha(n-j)S_{\alpha,\beta}^jx&=& \sum_{l=0}^\infty A^{l+1}\tau \sum_{j=0}^n k_\tau^\alpha(n-j)k_\tau^{\alpha l+\beta}(j)x\\
&=&\sum_{l=0}^\infty A^{l+1}k_\tau^{\alpha(l+1)+\beta}(n)x\\
&=&\sum_{j=0}^\infty A^{j}k_\tau^{\alpha j+\beta}(n)x-k_\tau^\beta(n)x.
\end{eqnarray*}
Hence,
\begin{equation*}
k_\tau^\beta(n)x+\tau A\sum_{j=0}^n k_\tau^\alpha(n-j)S_{\alpha,\beta}^jx=\sum_{j=0}^\infty A^{j}k_\tau^{\alpha j+\beta}(n)x,
\end{equation*}
that is,
\begin{equation*}
S_{\alpha,\beta}^n=\sum_{j=0}^\infty k_\tau^{\alpha j+\beta}(n)A^j.
\end{equation*}
\end{proof}
The next Corollary is a direct consequence of Proposition \ref{Th-Representation}.
\begin{corollary}
Let $\tau<1.$ If $A$ is a bounded operator with $\|A\|<1,$ then $A$ generates the $(1,1)$-resolvent sequence $\{S_{1,1}^n\}_{n\in\N_0}$ defined by
\begin{equation}\label{SemigroupCase}
S_{1,1}^n=\sum_{j=0}^\infty k_\tau^{j+1}(n)A^j=\sum_{j=0}^\infty  \tau^{j}\frac{\Gamma(j+n+1)}{\Gamma(j+1)\Gamma(n+1)}A^j=\sum_{j=0}^\infty  \tau^{j}{n+j\choose j} A^j.
\end{equation}
\end{corollary}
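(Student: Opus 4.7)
The plan is to derive this corollary as a direct specialization of Proposition \ref{Th-Representation}. First I would verify that the hypotheses of that proposition are satisfied when $\alpha=\beta=1$: the condition $\tau^{\alpha}<1$ reduces to $\tau<1$, which is assumed, and the assumption $\|A\|<1$ is given. Therefore Proposition \ref{Th-Representation} applies and yields that $A$ generates the $(1,1)$-resolvent sequence
\begin{equation*}
S_{1,1}^n=\sum_{j=0}^\infty k_\tau^{j+1}(n)A^j,
\end{equation*}
which is the first equality of \eqref{SemigroupCase}.

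Next I would compute $k_\tau^{j+1}(n)$ explicitly by invoking the closed-form formula \eqref{eq2.2}. Taking $\alpha=j+1$ there gives
\begin{equation*}
k_\tau^{j+1}(n)=\frac{\tau^{(j+1)-1}\Gamma((j+1)+n)}{\Gamma(j+1)\Gamma(n+1)}=\tau^j\frac{\Gamma(j+n+1)}{\Gamma(j+1)\Gamma(n+1)},
\end{equation*}
which produces the second equality of \eqref{SemigroupCase}.

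Finally, to obtain the third equality, I would translate the gamma quotient into a binomial coefficient: since $\Gamma(n+j+1)=(n+j)!$, $\Gamma(j+1)=j!$ and $\Gamma(n+1)=n!$, we have $\frac{\Gamma(n+j+1)}{\Gamma(j+1)\Gamma(n+1)}=\binom{n+j}{j}$, so that
\begin{equation*}
S_{1,1}^n=\sum_{j=0}^\infty \tau^j\binom{n+j}{j}A^j,
\end{equation*}
completing the chain. There is no substantive obstacle here; the corollary is essentially a bookkeeping exercise on top of Proposition \ref{Th-Representation}, with the only thing to double-check being the absolute convergence of the series, which follows from $\tau<1$ and $\|A\|<1$ through the generating-function identity cited from \cite[Formula 8.328]{Gr-Ry-00} in the proof of Proposition \ref{Th-Representation}.
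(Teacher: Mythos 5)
Your proposal is correct and matches the paper's treatment: the paper states the corollary as a direct consequence of Proposition \ref{Th-Representation} with $\alpha=\beta=1$, and the remaining equalities are exactly the bookkeeping you carry out via \eqref{eq2.2} and the identity $\Gamma(n+j+1)/(\Gamma(j+1)\Gamma(n+1))=\binom{n+j}{j}$. Nothing further is needed.
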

Now, since for any $\beta>0$
\begin{equation*}
k_1^\beta(n)=\frac{n^{\beta-1}}{\Gamma(\beta)}\left(1+O\left(\frac{1}{n}\right)\right), n\in\N, \beta>0
\end{equation*}
(see \cite[Formula 8.328]{Gr-Ry-00}) we get
\begin{equation*}
\frac{\Gamma(j+n+1)}{\Gamma(j+1)\Gamma(n+1)}=k_1^{j+1}(n)=\frac{n^{j}}{j!}\left(1+O\left(\frac{1}{n}\right)\right)
\end{equation*}
and therefore, the identity \eqref{SemigroupCase} gives an approximation of the semigroup
\begin{equation*}
e^{tA}:=\sum_{j=0}^\infty  \frac{(tA)^j}{j!}
\end{equation*}
at $t_n:=n\tau,$ that is, $S_{1,1}^n$ approximates $e^{t_nA}$ for each $n\in\N_0.$

\begin{remark} For $n\in \mathbb{N}$ given, we define the matrix $A\in\mathbb{M}_{n+1}(\mathbb{R})$ and the vector $R\in \mathbb{R}^{n+1}$ as follow:
\begin{align*}
A(i,j):=\begin{cases}
a_{i-1,j},& i\geq j,\\
0, &j	>i.
\end{cases}
\quad R(i):=R^i, \quad i=1,...,n+1.
\end{align*}
Then, $S\in\mathbb{M}_{(n+1)\times 1}(\mathbb{R})$ defined by
\begin{align*}
S(i)=S_{\alpha,\beta}^{i-1}, \quad i=1,...,n+1.
\end{align*}
satisfies $S=AR^T$. Furthermore, it is not difficult to see that for the case $\alpha=\beta=1$, the matrix $A$ corresponds to unity $I_{n+1}$.

\end{remark}

We notice that if $e_{\alpha,\beta}(t):=t^{\beta-1}E_{\alpha,\beta}(-\varrho t^\alpha),$ where $\varrho>0,$ then $\{S_{\alpha,\beta}^n\}_{n\in\N_0},$ corresponds to a discretization of $e_{\alpha,\beta}(t)$ on the interval $[0,T].$ In Figure \ref{Figura2}, we illustrate the function $e_{\alpha,\beta}(t):=t^{\beta-1}E_{\alpha,\beta}(-\varrho t^\alpha)$ and the sequence $S_{\alpha,\beta}^n$ (generated by $A=\varrho I$) on the interval $[0,1],$ where $\tau=1/N,$ $0\leq n\leq N$ and $N=100,$ respectively. For $\varrho=1,$ we choose, respectively, $\alpha=1.1,\beta=0.1,$ and $\alpha=0.1,\beta=0.9.$

\begin{figure}[h]
\centering
\includegraphics[scale=0.24]{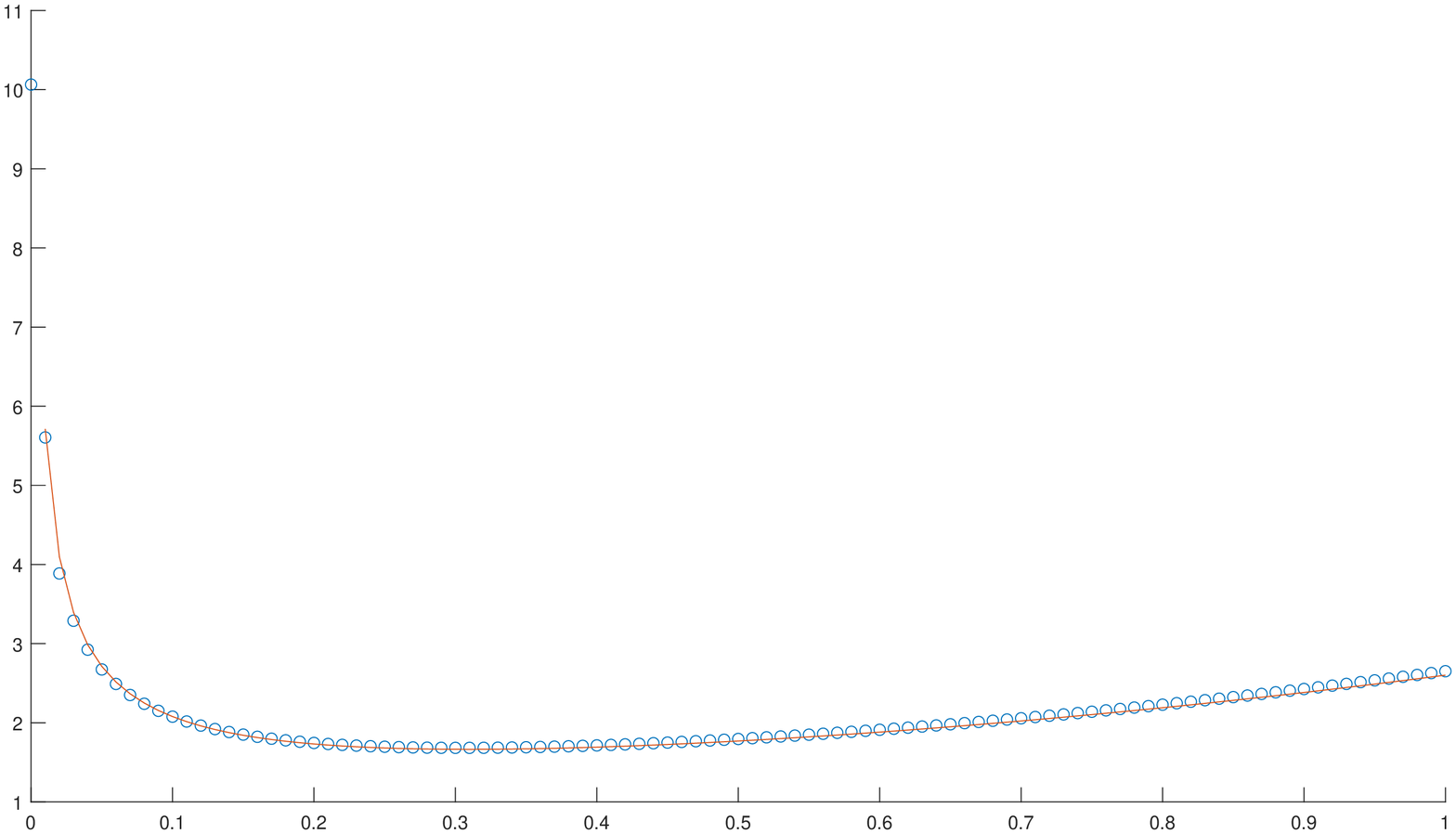}
\includegraphics[scale=0.24]{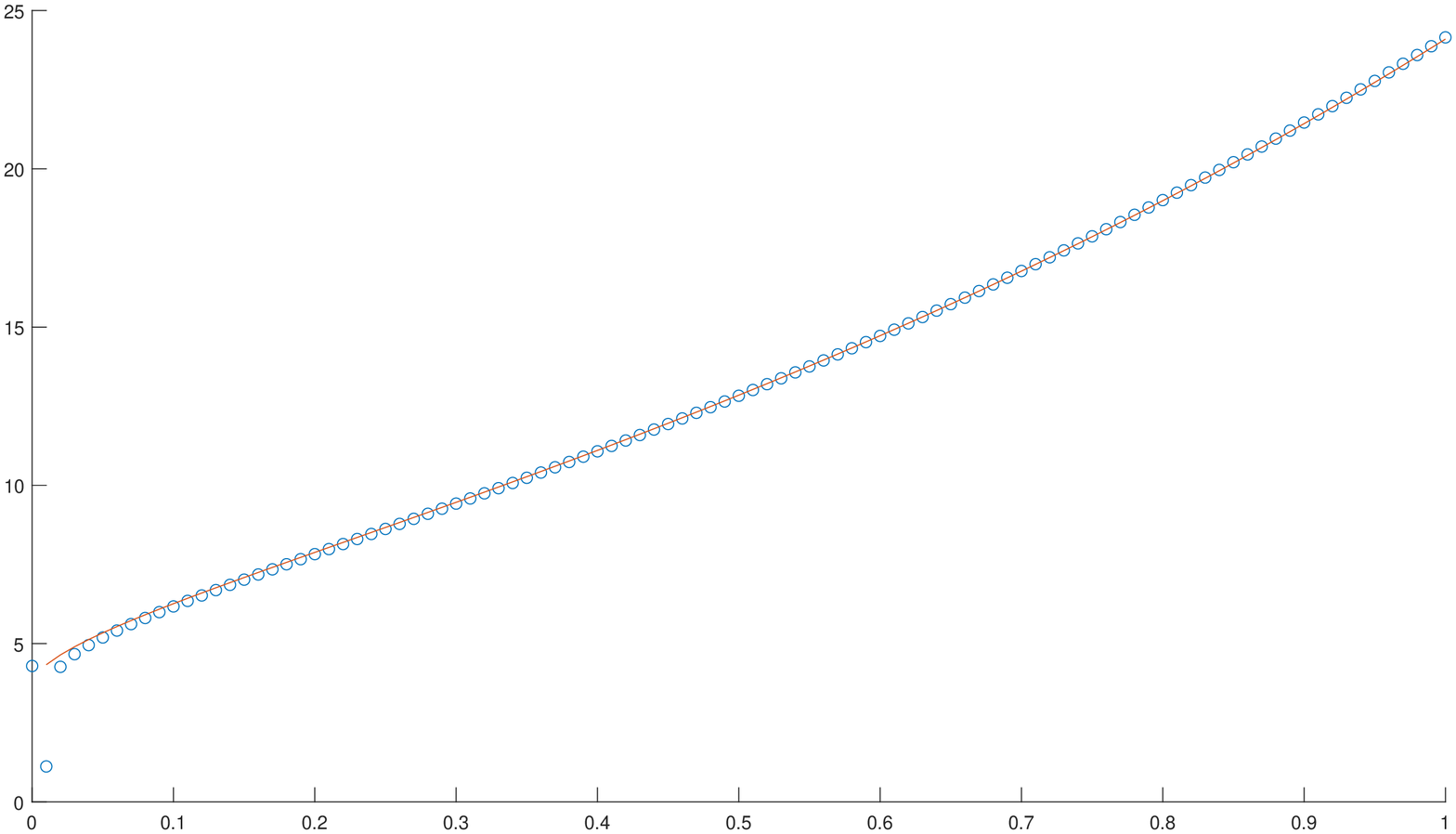}
\caption{$e_{\alpha,\beta}(t)$ (line) and $S_{\alpha,\beta}^n$ (circles) for $N=100.$}
\label{Figura2}
\end{figure}

\section{Solution to a fractional difference equations}\label{Sect4}

In this section, we study the existence and uniqueness of solutions to a fractional difference equation. The results in this section shows that the discrete resolvent families play a crucial role in the representation of solutions.

To illustrate the previous results, we consider the initial value problem
\begin{eqnarray}\label{MainEquation}
\left\{ \begin{array}{cll}
\,_C\Na^{\alpha} u^n&=&Au^n+\,_C\Na^{\alpha-1}f^n, \quad n\geq 2,\\
u^0&=&x_0, \\
u^1&=&0, \\
\end{array} \right.
\end{eqnarray}
where $1<\alpha<2,$ $A$ is a closed linear operator in a Banach space $X$ and $x_0\in X.$ We notice that \eqref{MainEquation} can be see as a discretization of the problem
\begin{equation}\label{Eq-Caputo-Continuous}
\partial_t^\alpha u(t)=Au(t)+\partial_t^{\alpha-1}f(t),\quad t>0,
\end{equation}
under the initial conditions $u(0)=x_0$ and $u'(0)=0,$ where $\partial_t^\alpha$ denotes the Caputo fractional derivative. This equation has been widely studied in the last years, see for instance \cite{Ar-Li-08,Cu-Li-08,Cu-So-09,Li-Ng-13} and references therein. By \cite{Cu-07} or \cite{Pon-20a}, if $A$ generates an exponentially bounded $(\alpha,1)$-resolvent family $\{S_{\alpha,1}(t)\}_{t\geq 0}$ in the sense of \eqref{eq3.1b}, then the solution to \eqref{Eq-Caputo-Continuous} is given by
\begin{equation*}
u(t)=S_{\alpha,1}(t)x_0+\int_0^t S_{\alpha,1}(t-s)f(s)ds.
\end{equation*}

The next result shows that the solution to \eqref{MainEquation} can be written as a discrete variation of parameter formula, similarly to the continuous case.

\begin{theorem}\label{Th-Caputo1-Resolvent}
Let $\tau>0$ and $1<\alpha<2.$ Let $A$ be the generator of an $(\alpha,1)$-discrete resolvent sequence $\{S_{\alpha,1}^n\}_{n\in \N_0}.$ If $x_0\in X,$ then the Caputo fractional difference equation \eqref{MainEquation} has a unique solution given by
\begin{equation*}\label{Sol-Resolvent}
u^n=S_{\alpha,1}^n x_0+\tau(S_{\alpha,1}\star f)^n,
\end{equation*}
for all $n\geq 2$ and $u^0=x_0,$ $u^1=0.$
\end{theorem}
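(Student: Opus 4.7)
The natural approach is to apply the $Z$-transform to both sides of \eqref{MainEquation}, reduce it to an algebraic identity involving the resolvent $((z-1)/(\tau z))^\alpha - A)^{-1}$, and then match the result with $\widetilde{S_{\alpha,1}}(z)$ via Proposition \ref{Z-Transform}.

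First I would compute the $Z$-transforms of the two Caputo fractional differences. By definition $_C\Na^\alpha u^n = \Na_\tau^{-(2-\alpha)}(\Na_\tau^2 u)^n$ and $_C\Na^{\alpha-1} f^n = \Na_\tau^{-(2-\alpha)}(\Na_\tau f)^n$. Using the convolution property \eqref{Zconvolution}, the fact that the $Z$-transform of the fractional sum $(\Na_\tau^{-\gamma}v)^n = \tau(k_\tau^\gamma\star v)^n$ equals $\tau\widetilde{k_\tau^\gamma}(z)\tilde v(z)$, together with the explicit formula \eqref{Zk} for $\widetilde{k_\tau^\gamma}(z)$, and adopting the standard convention $u^{-1}=u^{-2}=f^{-1}=0$, one obtains
\begin{equation*}
\widetilde{_C\Na^\alpha u}(z) = \left(\tfrac{z-1}{\tau z}\right)^\alpha \tilde u(z) - \Phi_u(z), \qquad \widetilde{_C\Na^{\alpha-1} f}(z) = \left(\tfrac{z-1}{\tau z}\right)^{\alpha-1} \tilde f(z) - \Phi_f(z),
\end{equation*}
where $\Phi_u(z)$ is a finite linear combination (in the powers of $(z-1)/(\tau z)$) of $u^0$ and $u^1$, and $\Phi_f(z)$ is a similar expression in $f^0$.

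Second, I would substitute the initial conditions $u^0 = x_0$ and $u^1 = 0$ into $\Phi_u(z)$ and then solve the transformed equation for $\tilde u(z)$:
\begin{equation*}
\left(\left(\tfrac{z-1}{\tau z}\right)^\alpha - A\right)\tilde u(z) = \left(\tfrac{z-1}{\tau z}\right)^{\alpha-1}\tilde f(z) + \Psi(z)x_0,
\end{equation*}
where $\Psi(z)$ collects the surviving initial-data contribution after cancellations (one expects $\Psi(z)=((z-1)/(\tau z))^{\alpha-1}/\tau$, the choice $u^1=0$ being precisely what makes the $u^1$ term cancel against the $f^0$ term). Inverting the resolvent and invoking Proposition \ref{Z-Transform} with $\beta=1$ gives $\widetilde{S_{\alpha,1}}(z) = \tfrac{1}{\tau}\left(\tfrac{z-1}{\tau z}\right)^{\alpha-1}\left(\left(\tfrac{z-1}{\tau z}\right)^\alpha - A\right)^{-1}$, so the right-hand side reduces to $\widetilde{S_{\alpha,1}}(z)x_0 + \tau \widetilde{S_{\alpha,1}}(z)\tilde f(z)$. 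By \eqref{Zconvolution} this equals the $Z$-transform of $n\mapsto S_{\alpha,1}^n x_0 + \tau(S_{\alpha,1}\star f)^n$. Uniqueness then follows from injectivity of the $Z$-transform.

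The main obstacle is the careful bookkeeping of the initial-data terms $\Phi_u(z)$ and $\Phi_f(z)$: they involve several inverse powers of $(z-1)/(\tau z)$, and one must verify that after imposing $u^0=x_0$, $u^1=0$, everything collapses to a single term proportional to $\widetilde{S_{\alpha,1}}(z)x_0$. Once this algebra is checked, the rest is a routine consequence of Proposition \ref{Z-Transform} and the convolution rule \eqref{Zconvolution}.
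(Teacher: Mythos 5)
Your strategy (transform the equation, solve algebraically for $\tilde u(z)$, match with Proposition \ref{Z-Transform}) is genuinely different from the paper's, which instead verifies directly that ${}_C\Na^\alpha S_{\alpha,1}^nx_0=AS_{\alpha,1}^nx_0$ and ${}_C\Na^\alpha(\tau(S_{\alpha,1}\star f)^n)=A\tau(S_{\alpha,1}\star f)^n+{}_C\Na^{\alpha-1}f^n$ using the resolvent equation \eqref{ResolventEqn} and the convolution identity \eqref{Semigroup-for-K}, with uniqueness coming from a recursion. However, your sketch has a genuine gap at exactly the point you defer as ``bookkeeping.'' Under the convention you adopt ($u^{-1}=u^{-2}=f^{-1}=0$, which is the paper's convention \eqref{EqConvention}), the operator ${}_C\Na^\alpha=\Na_\tau^{-(2-\alpha)}\Na_\tau^2$ is a pure convolution, so $\widetilde{{}_C\Na^\alpha u}(z)=\left(\frac{z-1}{\tau z}\right)^\alpha\tilde u(z)$ with $\Phi_u\equiv 0$, and likewise $\Phi_f\equiv 0$: there is no Laplace-transform-style subtraction of $u^0$ and $u^1$. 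Consequently your transformed equation has $\Psi\equiv 0$ and delivers $\tilde u(z)=\tau\widetilde{S_{\alpha,1}}(z)\tilde f(z)$, losing the $x_0$ term entirely. The initial data can only enter through the fact that \eqref{MainEquation} is imposed solely for $n\geq 2$ while $u^0,u^1$ are prescribed separately; the transformed identity therefore carries an undetermined correction $c_0+c_1z^{-1}$ coming from the $n=0,1$ coefficients, which you must compute rather than assume away.

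Relatedly, the identity you aim for, $\tilde u(z)=\widetilde{S_{\alpha,1}}(z)x_0+\tau\widetilde{S_{\alpha,1}}(z)\tilde f(z)$, cannot hold as stated: comparing $z^0$-coefficients would force $u^0=S_{\alpha,1}^0(x_0+\tau f^0)=\tau^{-\alpha}(\tau^{-\alpha}-A)^{-1}(x_0+\tau f^0)$, contradicting $u^0=x_0$, since $S_{\alpha,1}^0\neq I$ by Proposition \ref{Prop2.1}. The representation $u^n=S_{\alpha,1}^nx_0+\tau(S_{\alpha,1}\star f)^n$ is claimed only for $n\geq 2$, so the two $Z$-transforms differ by a degree-one polynomial in $z^{-1}$, and the matching must be done coefficient-wise for $n\geq 2$ after tracking that discrepancy. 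Finally, applying the $Z$-transform to an arbitrary solution $u$ (and to $\{S_{\alpha,1}^n\}$ itself) presupposes convergence for $|z|$ large, which no hypothesis guarantees; uniqueness is safer by recursion, using $\tau^{-\alpha}\in\rho(A)$ to determine $u^n$ from $u^0,\dots,u^{n-1}$, which is in the spirit of what the paper does. The transform route is repairable, but as written the key cancellation you rely on does not occur.
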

\begin{proof}
Since $A$ generates an $(\alpha,1)$-discrete resolvent sequence $\{S_{\alpha,1}^n\}_{n\in\N_0}$ and $k^1_\tau(n)=1$ for all $n\in \N_0,$ we have, by definition, that
\begin{equation}\label{eqDiscreteSa1}
{S}_{\alpha,1}^jx=x+\tau A\sum_{l=0}^j k^{\alpha}_\tau(j-l){S}^l_{\alpha,1}x,
\end{equation}
for all $j\geq 0$ and $x\in X.$ By definition of the Caputo fractional backward difference operator for $1<\alpha<2,$ we have for all $n\geq 2$ that
\begin{equation}\label{eq3.1a}
_C\Na^\alpha (S_{\alpha,1}x)^n=\Na^{-(2-\alpha)}_\tau \Na^2_\tau (S_{\alpha,1}x)^n=\tau \sum_{j=0}^n k^{2-\alpha}_\tau(n-j)(\Na^2_\tau S_{\alpha,1}x)^j.
\end{equation}
The equality \eqref{eqDiscreteSa1} implies that
\begin{eqnarray*}
(\Na^2_\tau S_{\alpha,1}x)^j&=&\frac{1}{\tau^2}(S_{\alpha,1}^jx-2S_{\alpha,1}^{j-1}x+S_{\alpha,1}^{j-2}x)\\
&=&\frac{A}{\tau^2}\left[\tau \sum_{l=0}^j k^{\alpha}_\tau(j-l){S}^l_{\alpha,1}x-2\tau \sum_{l=0}^{j-1} k^{\alpha}_\tau(j-1-l){S}^l_{\alpha,1}x+\tau \sum_{l=0}^{j-2} k^{\alpha}_\tau(j-2-l){S}^l_{\alpha,1}x\right],
\end{eqnarray*}
for all $j\geq 2$ and $x\in X.$ Since $k^1_\tau(n)=1$ for all $n\in \N_0,$ the convolution property \eqref{Semigroup-for-K} implies that
\begin{eqnarray*}
\tau \sum_{j=0}^n k^{2-\alpha}_\tau(n-j)\tau \sum_{l=0}^j k^{\alpha}_\tau(j-l){S}^l_{\alpha,1}x&=&\tau^2\sum_{j=0}^n k^{2-\alpha}_\tau(n-j)(k_\tau^\alpha\star S_{\alpha,1})^j x\nonumber\\
&=&\tau^2(k_\tau^{2-\alpha}\star (k_\tau^\alpha\star S_{\alpha,1}))^n x\nonumber\\
&=&\tau(k_\tau^{2}\star S_{\alpha,1}))^n x\nonumber\\
&=&\tau^2(k_\tau^{1}\star (k_\tau^1\star S_{\alpha,1}))^n x\nonumber\\
&=&\tau^2\sum_{j=0}^n (k_\tau^{1}\star S_{\alpha,1})^j x \nonumber\\
&=&\tau^2\sum_{j=0}^n \sum_{l=0}^j S_{\alpha,1}^l x, \label{eq3.1aa}
\end{eqnarray*}
for all $n\geq 2.$ Since $\sum_{j=0}^{-k} v^j=0$ for all $k\in\N,$ we get similarly that
\begin{equation*}\label{eq3.3}
\tau \sum_{j=0}^n k^{2-\alpha}_\tau(n-j)\tau \sum_{l=0}^{j-1} k^{\alpha}_\tau(j-1-l){S}^l_{\alpha,1}x=\tau^2 \sum_{j=0}^{n-1} \sum_{l=0}^j{S}_{\alpha,1}^lx,
\end{equation*}
and \begin{equation}\label{eq3.4}
\tau \sum_{j=0}^n k^{2-\alpha}_\tau(n-j)\tau \sum_{l=0}^{j-2} k^{\alpha}_\tau(j-2-l){S}^l_{\alpha,1}x=\tau^2 \sum_{j=0}^{n-2} \sum_{l=0}^j{S}_{\alpha,1}^lx,
\end{equation}
for all $n\geq 2.$ By \eqref{eq3.1a}--\eqref{eq3.4} we obtain
\begin{eqnarray*}
_C\Na^\alpha (S_{\alpha,1}x)^n&=&A\left[\sum_{j=0}^n \sum_{l=0}^j{S}_{\alpha}^lx-2\sum_{j=0}^{n-1} \sum_{l=0}^j{S}_{\alpha,1}^lx+\sum_{j=0}^{n-2} \sum_{l=0}^j{S}_{\alpha,1}^lx\right]\\
&=&AS^n_{\alpha}x,
\end{eqnarray*}
for all $n\geq 2$ and $x\in X,$ and therefore
\begin{equation*}\label{EqA.1}
_C\Na^\alpha{S}^n_{\alpha,1}x_0=A{S}^n_{\alpha,1}x_0.
\end{equation*}
On the other hand, by definition we have
\begin{equation*}
_C\Na^\alpha (({S}_{\alpha,1}\star f)^n)=\Na^{-(2-\alpha)}_\tau \Na^2_\tau (({S}_{\alpha,1}\star f))^n=\tau \sum_{j=0}^n k^{2-\alpha}_\tau(n-j)\Na^2_\tau (\tau(S_{\alpha,1}\star f)^j),
\end{equation*}
for all $n\geq 2.$ Since
\begin{equation*}
\Na^2_\tau (S_{\alpha,1}\star f)^j=\frac{1}{\tau^2}\left[(S_{\alpha,1}\star f)^j-2(S_{\alpha,1}\star f)^{j-1}+(S_{\alpha,1}\star f)^{j-2}\right],
\end{equation*}
for all $j\geq 2,$ and by definition
\begin{equation*}
S_{\alpha,1}^n x=k_\tau^{1}(n)x+\tau A\sum_{j=0}^n k^{\alpha}_\tau(n-j)S_{\alpha,1}^jx=x+\tau A(k^\alpha_\tau\star S_{\alpha,1})^nx,
\end{equation*}
for all $x\in X,$ and $n\in \N_0,$ we get that
\begin{equation*}
(S_{\alpha,1} \star f)^n=\sum_{j=0}^n f^j+\tau A(k^\alpha_\tau\star S_{\alpha,1}\star f)^n,
\end{equation*}
for all $n\in\N_0.$ Hence
\begin{eqnarray*}
_C\Na^\alpha (( {S}_{\alpha,1}\star f)^n)&=&\tau \sum_{j=0}^n k^{2-\alpha}_\tau(n-j)\Na^2_\tau (S_{\alpha,1}\star f)^j\\
&=&\frac{1}{\tau^2}\sum_{j=0}^n k^{2-\alpha}_\tau(n-j)\left[\tau\sum_{l=0}^j f^l-2\tau\sum_{l=0}^{j-1} f^l+\tau\sum_{l=0}^{j-2} f^l\right]\\
&+&\frac{A}{\tau^2}\sum_{j=0}^n k^{2-\alpha}_\tau(n-j)\left[\tau (k^\alpha_\tau\star S_{\alpha,1}\star f)^j-2\tau (k^\alpha_\tau\star S_{\alpha,1}\star f)^{j-1}+\tau (k^\alpha_\tau\star S_{\alpha,1}\star f)^{j-2}\right],
\end{eqnarray*}
for all $n\geq 2.$

An easy computation shows that
\begin{equation}\label{eq2.4a}
\left[\tau\sum_{l=0}^j f^l-2\tau\sum_{l=0}^{j-1} f^l+\tau\sum_{l=0}^{j-2} f^l\right]=\tau^2 \frac{(f^j-f^{j-1})}{\tau}=\tau^2\Na^1_\tau(f)^j.
\end{equation}
Moreover, by \eqref{Semigroup-for-K}, we obtain
\begin{eqnarray*}
\tau \sum_{j=0}^n k^{2-\alpha}_\tau(n-j)(k^\alpha_\tau\star S_{\alpha,1}\star f)^j&=&\tau (k_\tau^{2-\alpha}\star k^\alpha\star S_{\alpha,\beta}\star f)^n\nonumber\\
&=&\tau^2(k_\tau^1\star k_\tau^1\star S_{\alpha,\beta}\star f)^n\nonumber\\
&=&\tau^2\sum_{j=0}^n k_\tau^1(n-j)(k_\tau^1\star S_{\alpha,\beta}\star f)^l\nonumber\\
&=&\tau^2 \sum_{j=0}^n \sum_{l=0}^j (S_{\alpha,\beta}\star f)^l\label{eq2.4b}.
\end{eqnarray*}
Similarly, by \eqref{EqConvention}, it is easy to prove that
\begin{equation*}\label{eq2.4c}
\tau \sum_{j=0}^n k^{2-\alpha}_\tau(n-j)(k^\alpha_\tau\star S_{\alpha,1}\star f)^{j-1}=\tau^2 \sum_{j=0}^{n-1} \sum_{l=0}^j (S_{\alpha,\beta}\star f)^l,
\end{equation*}
and
\begin{equation*}\label{eq2.4d}
\tau \sum_{j=0}^n k^{2-\alpha}_\tau(n-j)(k^\alpha_\tau\star S_{\alpha,1}\star f)^{j-2}=\tau^2 \sum_{j=0}^{n-2} \sum_{l=0}^j (S_{\alpha,\beta}\star f)^l,
\end{equation*}
for all $n\geq 2.$

On the other hand,
\begin{equation}\label{eq2.4e}
_C\Na^{\alpha-1} f^n=\Na_\tau^{-(1-(\alpha-1))}(\Na_\tau^1 f)^n=\Na_\tau^{2-\alpha}(\Na^1_\tau f)^n=\tau \sum_{j=0}^n k^{2-\alpha}_\tau(n-j)(\Na^1 f)^j
\end{equation}
and, by \eqref{eq2.4a}--\eqref{eq2.4e}, we conclude that
\begin{eqnarray*}
_C\Na^\alpha (\tau ( {S}_{\alpha,1}\star f)^n)&=&\tau \sum_{j=0}^n k^{2-\alpha}_\tau(n-j)\Na^1_\tau(f)^j+\frac{A}{\tau^2}\tau \sum_{j=0}^n k^{2-\alpha}_\tau(n-j)\bigg[\tau^2 \sum_{j=0}^n \sum_{l=0}^j (S_{\alpha,\beta}\star f)^l\\
&&\quad -2\tau^2 \sum_{j=0}^{n-1} \sum_{l=0}^j (S_{\alpha,\beta}\star f)^l+\tau^2 \sum_{j=0}^{n-2} \sum_{l=0}^j (S_{\alpha,\beta}\star f)^l\bigg]\\
&=&_C\Na^{\alpha-1} f^n+A(\tau ({S}_{\alpha,1}\star f)^n),
\end{eqnarray*}
for all $n\geq 2.$ We conclude that if $u^n:={S}^n_{\alpha,1}x_0+\tau (S_{\alpha,1}\star f)^n$ for $n\geq 2,$  then
\begin{eqnarray*}
_C\Na^\alpha(u^n)&=&\,_C\Na^\alpha\left({S}^n_{\alpha,1}x_0+\tau (S_{\alpha,1}\star f)^n\right)\\
&=&A{S}^n_{\alpha,1}x_0+A(\tau ({S}_{\alpha,1}\star f)^n)+_C\Na^{\alpha-1} f^n\\
&=&Au^n+_C\Na^{\alpha-1} f^n,
\end{eqnarray*}
for all $n\geq 2,$ that is, $u^n$ solves the equation
\begin{equation*}
_C\Na^\alpha u^n=Au^n+_C\Na^{\alpha-1} f^n,\quad n\geq 2.
\end{equation*}
We conclude that the sequence $(u^n)_{n\in \N_0}$ defined by
\begin{eqnarray*}
 u^n:=\left\{ \begin{array}{ccc}
{S}^n_{\alpha,1}x_0+\tau (S_{\alpha,1}\star f)^n, \quad n\geq 2,\\
x_0,\quad n=1, \\
0,\quad n=0, \\
\end{array} \right.
\end{eqnarray*}
solves the problem \eqref{MainEquation}. The uniqueness, follows from Proposition \ref{PropUniqueness}.
\end{proof}



\begin{thebibliography}{99}

\bibitem{Ab-Al-Di-21} L. Abad\'ias, E. \'Alvarez, S. D\'iaz, {\em Subordination principle, Wright functions and large-time behaviour for the discrete in time fractional diffusion equation.} arXiv:2102.10105v2.

\bibitem{Ab-Li-16} L. Abad\'ias, C. Lizama, {\em Almost automorphic mild solutions to fractional partial difference-differential equations,} Applicable Analysis, 95 (6) (2016), 1347-1369.


\bibitem{Ab-Li-Mi-Ve-19} L. Abadias, C. Lizama, P. J. Miana, M. P. Velasco, {\em On well-posedness of vector-valued fractional differential-difference equations,} Discrete and Continuous Dynamical Systems, Series A, {\bf 39} (5) (2019), 2679-2708.

\bibitem{Ab-11} T. Abdeljawad, {\em On Riemann and Caputo fractional differences,} Comput. Math. Appl. {\bf 62} (2011), no. 3, 1602-6111.

\bibitem{Ag-Cu-Li-14} R. Agarwal, C. Cuevas, C. Lizama, {\em Regularity of difference equations on Banach spaces, } Springer-Verlag, Cham, 2014. Hardcover ISBN 978-3-319-06446-8.

\bibitem{Al-Ca-Va-16} M. Allen, L. Caffarelli, A. Vasseur, {\em A parabolic problem with a fractional time derivative,} Arch. Ration. Mech. Anal. {\bf 221} (2016) 603-630.

\bibitem{Al-Di-Li-20} E. \'Alvarez, S. D\'iaz, C. Lizama, {\em C-Semigroups, subordination principle and the L\'evy $\alpha$-stable distribution on discrete time,} Comm. in Contemporary Mathematics, (2020) 2050063 (32 pages).


\bibitem{Ar-Li-08} D. Araya, C. Lizama, {\em Almost automorphic mild solutions to fractional differential equations,} Nonlinear Anal. {\bf 69} (2008), 3692-3705.

\bibitem{At-El-09}  F. Atici, P. Eloe, {\em Initial value problems in discrete fractional calculus,}
Proc. Amer. Math. Soc. {\bf 137} (2009), no. 3, 981-989.


\bibitem{Ba-01} E. Bazhlekova, {\em Fractional evolution equations in Banach spaces,} Ph.D. thesis, Eindhoven University of Technology, 2001.


\bibitem{Ca-Pl-15} P. de Carvalho-Neto,  G. Planas, {\em Mild solutions to the time fractional Navier-Stokes equations in $\mathbb{R}^N,$} J. Differential Equations {\bf 259} (2015), 2948-2980.


\bibitem{Cu-07} E. Cuesta, {\em Asymptotic behaviour of the solutions of fractional integro-differential equations and some time discretizations,} Discrete Contin. Dyn. Syst. 2007, Dyn. Syst. and Diff. Eqns. Proc. of the 6th AIMS Int. Conference, suppl., 277-285.

\bibitem{Cu-Li-08} C. Cuevas, C. Lizama, {\em Almost automorphic solutions to a class of semilinear fractional differential equations,}  Appl. Math. Lett. \textbf{21} (2008), 1315-1319.

\bibitem{Cu-So-09} C. Cuevas, J. de Souza, {\em $S$-asymptotically $\omega$-periodic solutions of semilinear fractional integro-differential equations,}  Appl. Math. Lett. \textbf{22} (2009), 865-870.

\bibitem{En-Na-00} K. Engel, R. Nagel, {\em One-parameter semigroups for linear evolution equations.} GTM vol. {\bf 194}, 2000.

\bibitem{Fe-12} R. Ferreira, {\em Discrete fractional Gronwall inequality,} Proc. Amer. Math. Soc. {\bf 140} (2012) (5), pp. 1605-1612.

\bibitem{Go-11} C. Goodrich, {\em Existence and uniqueness of solutions to a fractional difference equation with nonlocal conditions,} Comput. Math. Appl. 61 (2011), no. 2, 191-202.

\bibitem{Go-Li-20a} C. Goodrich, C. Lizama, {\em A transference principle for nonlocal operators using a convolutional approach: Fractional monotonicity and convexity,} Israel J. of Mathematics, {\bf 236} (2020), 533-589.

\bibitem{Go-Li-20b} C. Goodrich, C. Lizama, {\em Positivity, monotonicity and convexity for convolution operators,} Discrete and Continuous Dynamical Systems, Series A, 2020, {\bf 40} (8), 4961-4983.

\bibitem{Go-Pe-15} C. Goodrich, A. Peterson, {\em Discrete fractional calculus,} Springer, Cham, 2015.

\bibitem{Gr-Ry-00} I. Gradshteyn, I. Ryzhik, {\em Table of integrals, series and products,} Academic Press, New York, 2000.


\bibitem{Haa-06} M. Haase, {\em The functional calculus for sectorial operators,} Operator Theory: Advances and applications, 169, Birk\"auser Verlag, Basel, 2006.

\bibitem{He-Me-Po-21} H. Henr\'iquez, J. G. Mesquita, J. C. Pozo, {\em Existence of solutions of the abstract Cauchy problem of fractional order,} J. of Functional Analysis, 2021, to appear.


\bibitem{Ji-La-Zh-16} B. Jin, R. Lazarov, Z. Zhou, {\em Two fully discrete schemes for fractional diffusion and diffusion-wave equations with nonsmooth data,} SIAM J. Sci. Comput. {\bf 38} (2016), no. 1, A146-A170.

\bibitem{Ji-La-Zh-19} B. Jin, R. Lazarov, Z. Zhou, {\em Numerical methods for time-fractional evolution equations with nonsmooth data: a concise overview,} Comput. Methods Appl. Mech. Engrg. {\bf 346} (2019), 332-358.

\bibitem{Ji-Li-Zh-18} B. Jin, B. Li, Z. Zhou, {\em Discrete maximal regularity of time-stepping schemes for fractional evolution equations,} Numer. Math. {\bf 138} (2018), no. 1, 101--131.

\bibitem{Ji-Li-Zh-18b} B. Jin, B. Li, Z. Zhou, {\em Numerical analysis of nonlinear subdiffusion equations,} SIAM J. Numer. Anal. {\bf 56} (2018), no. 1, 1-23.

\bibitem{Ji-Li-Zh-19} B. Jin, B. Li, Z. Zhou, {\em Subdiffusion with a time-dependent coefficient: analysis and numerical solution,} Math. Comp. {\bf 88} (2019), no. 319, 2157-2186.

\bibitem{Ke-Li-Wa-13b} V. Keyantuo, C. Lizama, M. Warma, {\em Spectral criteria for solvability of boundary value problems and positivity of solutions of time-fractional differential equations,} Abstr. Appl. Anal. 2013, Art. ID 614328, 11 pp.

\bibitem{Ki-Sr-Tr-06} A. Kilbas, H. Srivastava, J. Trujillo, {\em Theory and applications of fractional differential equations,} North-Holland Mathematics studies 204, Elsevier Science B.V., Amsterdam, 2006. 

\bibitem{Ku-57} B. Kuttner, {\em On differences of fractional order,} Proc. London Math. Soc. {\bf 3}, No 1 (1957), 453-466.

\bibitem{Li-Ch-Li-10} M. Li, C. Chen, F. Li, {\em On fractional powers of generators of fractional resolvent families,} J. Funct. Anal. {\bf 259} (2010) 2702-2726.


\bibitem{Li-Pe-Ji-12} K. Li, J. Peng, J. Jia, {\em Cauchy problems for fractional differential equations with Riemann-Liouville fractional derivatives,} J. Funct. Anal. {\bf 263} (2012), 476-510.

\bibitem{Li-Li-15} Z. Liu, X. Li, {\em Approximate controllability of fractional evolution systems with Riemann-Liouville fractional derivatives,} SIAM J. Control Optim. {\bf 53} (2015), 1920-1933.


\bibitem{Li-17} C. Lizama, {\em The Poisson distribution, abstract fractional difference equations, and stability,} Proc. Amer. Math. Soc. {\bf 145} (2017), no. 9, 3809--3827.

\bibitem{Li-He-Zh-20} C. Lizama, W. He, Y. Zhou, {\em The Cauchy problem for discrete-time fractional evolution equations,} Journal of Computational and Applied Mathematics, {\bf 370} (2020), 112683.


\bibitem{Li-Ng-13}  C. Lizama, G. M. N'Gu\'er\'ekata, {\em Mild solutions for abstract fractional differential equations,}  Appl. Anal. \textbf{ 92} (2013), 1731-1754.

\bibitem{Li-FPo-12} C. Lizama, F. Poblete. {\em  On a functional equation associated with $(a,k)$-regularized resolvent families,}  Abstr. Appl. Anal. 2012, Art. ID 495487, 23 pp.


\bibitem{Lu-86}  Ch. Lubich, {\em Discretize fractional calculus,} SIAM J. Math. Anal., {\bf 17} (1986), 704-719.


\bibitem{Mi-Ro-93}  K. Miller, B. Ross, {\em An Introduction to the fractional calculus and fractional differential equations,} Wiley, New York 1993.


\bibitem{Pon-20a} R. Ponce, {\em Asymptotic behavior of mild solutions to fractional Cauchy problems in Banach spaces,} Appl. Math. Lett. {\bf 105} (2020), 106322.

\bibitem{Pon-20c} R. Ponce, {\em Subordination Principle for fractional diffusion-wave of Sobolev type,} Fract. Calc. Appl. Anal. 23 (2020), no. 2, 427-449.

\bibitem{Pon-19} R. Ponce, {\em Time discretization of fractional subdiffusion equations via fractional resolvent operators.} Comput. Math. Appl. {\bf 80} (2020), no. 4, 69-92.


\bibitem{Pr} J. Pr\"uss. {\em Evolutionary Integral Equations and Applications}. Monographs Math.,  {\bf 87}, Birkh\"{a}user Verlag, 1993. 

\bibitem{To-Ta-17} B. Torebek, R. Tapdigoglu, {\em Some inverse problems for the nonlocal heat equation with Caputo fractional derivative,} Math. Methods Appl. Sci. {\bf 40} (2017), no. 18, 6468-6479.


\bibitem{Wa-Ch-Xi-12} R. Wang, D. Chen, T. Xiao, {\em Abstract fractional Cauchy problems with almost sectorial operators,} J. Diff. Equations 252 (2012), 202-235.

\bibitem{Xi-Wa-18} Z. Xia, D. Wang, {\em Asymptotic behavior of mild solutions for nonlinear fractional difference equations,}  Fractional Calculus and Applied Analysis, 2018, DOI: 10.1515/fca-2018-0029.













\end{thebibliography}
\end{document}